\theoremstyle{plain}
\newtheorem{lemma}[subsection]{Lemma}
\newtheorem{theorem}[subsection]{Theorem}
\newtheorem{proposition}[subsection]{Proposition}
\newtheorem{corollary}[subsection]{Corollary}
\theoremstyle{definition}
\newtheorem{example}[subsection]{Example}
\newtheorem{definition}[subsection]{Definition}
\newtheorem{remark}[subsection]{Remark}
\newcommand{\comp}{\raisebox{0.2mm}{\ensuremath{\scriptstyle{\circ}}}}
\newcommand{\defn}{\textbf}
\renewcommand{\equiv}{$\Leftrightarrow$}
\renewcommand{\implies}{$\Rightarrow$}
\newcommand{\join}{\vee}
\newcommand{\meet}{\wedge}
\newcommand{\musgib}[2]{\left\langle\begin{smallmatrix} {#1} \\ {#2}\end{smallmatrix}\right\rangle}
\newcommand{\muss}[3]{\left\langle\begin{smallmatrix} {#1} \\ {#2} \\ {#3}\end{smallmatrix}\right\rangle}
\newcommand{\noproof}{\hfill\qed}
\newcommand{\normal}{\ensuremath{\lhd}}
\newcommand{\protosplit}{\ensuremath{\LHD}}
\newcommand{\tensor}{\diamond}
\DeclareMathOperator{\ab}{Ab}
\DeclareMathOperator{\coker}{coker}
\DeclareMathOperator{\Eq}{Eq}
\DeclareMathOperator{\Ker}{Ker}
\newcommand{\X}{\ensuremath{\mathcal{X}}}
\newcommand{\KK}{\ensuremath{\mathbb{K}}}
\newcommand{\ZZ}{\ensuremath{\mathbb{Z}}}
\newcommand{\Ab}{\ensuremath{\mathsf{Ab}}}
\newcommand{\Pt}{\ensuremath{\mathsf{Pt}}}
\renewcommand{\H}{\ensuremath{\mathrm{H}}}
\renewcommand{\S}{\ensuremath{\mathrm{S}}}
\newcommand{\Z}{\ensuremath{\mathrm{Z}}}
\newcommand{\NH}{{\rm (NH)}}
\newcommand{\PA}{{\rm (PA)}}
\newcommand{\SH}{{\rm (SH)}}
\newcommand{\UCE}{{\rm (UCE)}}
\newcommand{\WNH}{{\rm (WNH)}}
\def\pullback{
 \ar@{-}[]+R+<6pt,-1pt>;[]+RD+<6pt,-6pt>%
 \ar@{-}[]+D+<1pt,-6pt>;[]+RD+<6pt,-6pt>}
\def\pushout{%
 \ar@{-}[]+L+<-6pt,1pt>;[]+LU+<-6pt,6pt>%
 \ar@{-}[]+U+<-1pt,6pt>;[]+LU+<-6pt,6pt>}
\def\splitpullback{%
 \ar@{-}[]+R+<6pt,-.51ex>;[]+RD+<6pt,-6pt>%
 \ar@{-}[]+D+<.51ex,-6pt>;[]+RD+<6pt,-6pt>}
\def\skewpullback{%
 \ar@{-}[]+LD+<-6pt,-6pt>;[]+LDD+<-6pt,-15.5pt>%
 \ar@{-}[]+D+<-1pt,-6pt>;[]+LDD+<-6pt,-15.5pt>}
 \def\sqpullback{%
 \ar@{-}[]+RU+<5pt,3pt>;[]+RD+<5pt,-5pt>%
 \ar@{-}[]+D+<-4pt,-5pt>;[]+RD+<5pt,-5pt>}
\begin{document}

\title[Peri-abelian categories and (UCE)]{Peri-abelian categories and\\ the universal central extension condition}

\author{James R.~A.~Gray}
\author{Tim Van~der Linden}

\email{jamesgray@sun.ac.za}
\email{tim.vanderlinden@uclouvain.be}

\address{Mathematics Division, Department of Mathematical Sciences, Stellenbosch University, Private Bag X1, Matieland 7602, South Africa}
\address{Institut de recherche en math\'ematique et physique, Universit\'e catholique de Louvain, chemin du cyclotron~2 bte~L7.01.02, 1348 Louvain-la-Neuve, Belgium}

\thanks{Tim Van der Linden is a Research Associate of the Fonds de la Recherche Scientifique--FNRS. He wishes to thank UNISA for its kind hospitality during his stay in Johannesburg}

\keywords{Semi-abelian, peri-abelian category; Higgins, Huq, Smith commutator; universal central extension; perfect object}

\subjclass[2010]{18B99, 18E99, 18G50, 20J05}

\begin{abstract}
We study the relation between Bourn's notion of peri-abelian category and conditions involving the coincidence of the Smith, Huq and Higgins commutators. In particular we show that a semi-abelian category is peri-abelian if and only if for each normal subobject $K\normal X$, the Higgins commutator of $K$ with itself coincides with the normalisation of the Smith commutator of the denormalisation of $K$ with itself. We show that if a category is peri-abelian, then the condition (UCE), which was introduced and studied by Casas and the second author, holds for that category. In addition we show, using amongst other things a result by Cigoli, that all categories of interest in the sense of Orzech are peri-abelian and therefore satisfy the condition (UCE).
\end{abstract}

\maketitle

\section*{Introduction}
Using Janelidze and Kelly's general notion of central extension~\cite{Janelidze-Kelly}, the classical theory of universal central extensions valid for groups and Lie algebras---see, for instance,~\cite{Milnor, Weibel}---may be generalised to the context of semi-abelian categories~\cite{Janelidze-Marki-Tholen, Borceux-Bourn} with enough projectives. As explained in~\cite{CVdL}, \emph{most} of this generalisation is entirely straightforward. Somewhat surprisingly though, there is a difficulty in obtaining a general version of the standard recognition theorem for universal central extensions, which characterises universality of a central extension in terms of properties of its domain. In the case of groups, this result says that a central extension $u\colon {U\to Y}$ of groups is universal if and only if $\H_{1}(U,\ZZ)=\H_{2}(U,\ZZ)=0$ or, equivalently, if and only if $U$ is perfect and every central extension of the group $U$ splits~\cite{Milnor}.

As it turns out, this general theory of universal central extensions works well when the underlying semi-abelian category satisfies an additional requirement, called the \defn{universal central extension condition} or \defn{(UCE)} in~\cite{CVdL}, that is, if $B$ is a perfect object and $f\colon{A\to B}$ and $g\colon{B\to C}$ are central extensions in $\X$, then the extension $g\comp f$ is also central. Indeed, for any perfect object $U$ of $\X$, the statements
\begin{enumerate}
\item each central extension $u\colon {U\to Y}$ is universal;
\item each central extensions of $U$ splits;
\item each universal central extensions of $U$ splits;
\item $\H_{2}(U)=0$
\end{enumerate}
are equivalent if and only if \UCE\ holds. Here we assume that $\X$ has enough projectives; furthermore, centrality, perfectness and homology are all defined with respect to the Birkhoff subcategory $\Ab(\X)$ of abelian objects of $\X$. The condition \UCE\ clearly holds for groups and Lie algebras; on the other hand, the category of non-associative algebras over a field is semi-abelian but does not satisfy \UCE, which shows that this condition does not hold in an arbitrary semi-abelian category.
The aim of the present paper is to understand how the condition \UCE\ is related to other conditions occurring in categorical algebra, in particular which conditions it follows from. We analyse it in terms of basic commutator conditions, proving that it is closely related to the notion of \emph{peri-abelian} category with appeared in recent work by Bourn~\cite{Bourn-Peri}. For any semi-abelian category there is a natural notion of action~\cite{Bourn-Janelidze:Semidirect}---generalising that of a group $G$ acting on a group---as well as Beck's notion of $G$-module~\cite{Beck}. The category of groups being peri-abelian amounts to the fact that the universal way to make a $G$-action on a group $X$ into a $G$-module is to abelianise $X$. For semi-abelian categories this becomes a condition which may or may not hold. We show in Proposition~\ref{(SH) + (WNH) => (PA)} that a semi-abelian category is peri-abelian (satisfies \defn{condition~(PA)}) if and only if there is a partial coincidence of the Higgins and Smith commutators in it: the Higgins~\cite{MM-NC} commutator $[K,K]$ of any normal subobject $K\normal X$ in it is the normalisation (= zero-class, see~\cite{Bourn2000, Borceux-Bourn}) of the Smith~\cite{Smith, Pedicchio} commutator $[R,R]^{\S}$, where $R$ is the equivalence relation corresponding to $K$ (=~its \emph{denormalisation}). As a consequence, combining results in~\cite{AlanThesis}, \cite{Montoli} and~\cite{BJ07}, we see that any \emph{category of interest} in the sense of Orzech~\cite{Orzech} is peri-abelian. It is not a coincidence that categories of non-associative rings need not be such~\cite[Example~5.3.7]{AlanThesis}. Indeed---this is Theorem~\ref{Theorem (NH) => (UCE)}---a semi-abelian category which is peri-abelian will always satisfy~\UCE, as explained in Section~\ref{Section (UCE)}.

We start with a revision of some basic commutator theory in Section~\ref{Section Preliminaries}. In Section~\ref{Section (WNH) and (PA)} we reformulate the concept of peri-abelian category in the language of commutators, which gives us the equivalent conditions of Proposition~\ref{(SH) + (WNH) => (PA)}. The final Section~\ref{Section (UCE)} leads towards our main Theorem~\ref{Theorem (NH) => (UCE)}: \PA\ implies \UCE.

\section{Preliminaries}\label{Section Preliminaries}
Throughout the text we assume that $\X$ is a semi-abelian category. In this section we recall the definitions and some basic properties of the Huq, Smith and Higgins commutators. Before doing so let us introduce some terminology and notation. We will call a diagram 
\[
\xymatrix{
X \ar@<.5ex>@{-{ >>}}[r]^-{f} & Y \ar@<.5ex>[l]^-{s}
}
\]
where $fs=1_Y$ a \defn{point in $\X$}, and a diagram
\[
\xymatrix{X' \ar[d]_-{\theta} \ar@{-{ >>}}@<.5ex>[r]^{f'} & Y' \ar[d]^{\phi} \ar@<.5ex>[l]^{s'} \\
X \ar@<.5ex>@{-{ >>}}[r]^-{f} & Y \ar@<.5ex>[l]^-{s}}
\]
where the top and bottom rows are points and $\phi f' = f\theta$ and $\theta s' = s\phi$, a \defn{morphism of points}. We will denote by $\Pt(\X)$ the category of points and by $\Pt_Y(\X)$ the fiber above $Y$ of the fibration sending a point to the codomain of its split epimorphism. We will call a diagram
\[
\xymatrix{
K \ar[r]^{k} & X \ar@<.5ex>@{-{ >>}}[r]^-{f} & Y \ar@<.5ex>[l]^-{s}
}
\]
where $fs=1_Y$ and $k$ is a kernel of $f$ a \defn{point with chosen kernel}, and a diagram 
\[
\xymatrix{K' \ar[d]_{\kappa}\ar[r]^{k'} & X' \ar[d]^{\theta} \ar@{-{ >>}}@<.5ex>[r]^{f'} & Y' \ar[d]^{\phi} \ar@<.5ex>[l]^{s'} \\
K \ar[r]_{k} &X \ar@<.5ex>@{-{ >>}}[r]^-{f} & Y \ar@<.5ex>[l]^-{s}}
\]
where the top and bottom rows are points with chosen kernels and $\theta k'=k\kappa$, $\phi f' = f\theta$ and $\theta s' = s\phi$, a \defn{morphism of points with chosen kernel}.

\subsection{The Huq commutator}\label{Huq commutator}
A cospan of monomorphisms in $\X$ as on left
\begin{equation*}\label{Cospan}
\vcenter{\xymatrix{K \ar@{{ >}->}[r]^-{k} & X & L \ar@{{ >}->}[l]_-{l}}}
\qquad\qquad
\vcenter{\xymatrix@!0@=3em{ & K \ar[ld]_{\langle 1_{K},\,0\rangle} \ar[rd]^-{k} \\
K\times L \ar@{.>}[rr]|-{\varphi} && X\\
& L \ar[lu]^{\langle 0,\,1_{L}\rangle} \ar[ru]_-{l}}}
\end{equation*}
is said to \defn{(Huq-)commute}~\cite{BG,Huq} when there exists a (necessarily unique) morphism~$\varphi$ making the diagram on the right commute. The \defn{Huq commutator of~$k$ and~$l$}~\cite{Bourn-Gran, Borceux-Bourn} is defined to be the smallest normal subobject $[k,l]_{X}\colon {[K,L]_{X}\to X}$, making the images of $k$ and~$l$ commute in the quotient $X/[K,L]$. In this context it can be shown that the Huq commutator always exists, and can be constructed as as the kernel $[K,L]_{X}$ of the (normal epi)morphism ${X\to Q}$, where $Q$ is the colimit of the outer square above.

\subsection{The Smith commutator}\label{Smith commutator}
Given a pair of equivalence relations $(R,S)$ on a common object $X$ of $\X$ as on the left
\begin{equation*}\label{Category-RG}
\vcenter{\xymatrix@!0@=5em{R \ar@<1.5ex>[r]^-{r_{1}} \ar@<-1.5ex>[r]_-{r_{2}} & X \ar[l]|-{\Delta_R} \ar[r]|-{\Delta_S} & S \ar@<1.5ex>[l]^-{s_{1}} \ar@<-1.5ex>[l]_-{s_{2}}}}
\qquad\quad
\vcenter{\xymatrix@!0@=4em{R\times_{X}S \pullback \ar[r]^-{\pi_{S}} \ar[d]_-{\pi_{R}} & S \ar[d]^-{s_{1}} \\
R \ar[r]_-{r_{2}} & X}}
\qquad\quad
\vcenter{\xymatrix@!0@=3em{ & R \ar[ld]_{\langle 1_{R},\,\Delta_S \circ r_{2}\rangle} \ar[rd]^-{r_{1}} \\
R\times_{X}S \ar@{.>}[rr]|-{\theta} && X\\
& S \ar[lu]^{\langle\Delta_R\circ s_{1},\,1_{S}\rangle} \ar[ru]_-{s_{2}}}
}
\end{equation*}
consider the induced pullback of $r_{2}$ and $s_{1}$ in the middle. The equivalence relations $R$ and $S$ are said to \defn{centralise each other} or to \defn{(Smith-)commute}~\cite{Smith, Pedicchio, BG} when there exists a (necessarily unique) morphism $\theta$ making the diagram on the right commute. In a similar way as for the Huq commutator, the \defn{Smith commutator} is defined to be the smallest equivalence relation $[R,S]^{\S}$ on $X$, making the images of~$R$ and~$S$ in the quotient $X/[R,S]^{\S}$ commute. In this context it can be shown to always exist, since it admits a construction similar to the Huq commutator's. It follows that $R$ and $S$ commute if and only if~${[R,S]^{\S}=\Delta_{X}}$, where~$\Delta_{X}$ denotes the smallest equivalence relation on $X$.

We say that $R$ is a \defn{central} equivalence relation when it commutes with $\nabla_X$, the largest equivalence relation on $X$, so that~$[R,\nabla_X]^{\S}=\Delta_X$. A \defn{central extension} is a regular epimorphism $f\colon{X\to Y}$ whose kernel pair $\Eq(f)$ is a central equivalence relation.

Smith commutators characterise internal groupoids~\cite{Pedicchio}: a reflexive graph
\[
\xymatrix{X \ar@<1ex>[r]^-{d} \ar@<-1ex>[r]_-{c} & Y \ar[l]|-{e}}
\]
in $\X$ will be a groupoid if and only if $[\Eq(d),\Eq(c)]^{\S}=\Delta_{X}$. In particular, it characterises Beck modules~\cite{Beck}, since any Beck module in $\X$, which is an abelian object $(f,s)\colon X\rightleftarrows Y$ in the category $\Pt_{Y}(\X)$ of points over~$Y$, so in particular a split extension $f$ with chosen splitting $s$, may be seen as an internal groupoid of the form
\[
\xymatrix{X \ar@<1ex>[r]^-{f} \ar@<-1ex>[r]_-{f} & Y; \ar[l]|-{s}}
\]
see~\cite{Bourn-Janelidze:Torsors} where this is explained in detail. Hence the abelianisation of a point $(f,s)\colon X\rightleftarrows Y$ is obtained through the quotient $X/[\Eq(f),\Eq(f)]^{\S}$.

\subsection{The coincidence of the Smith and Huq commutators}\label{(SH)}
It is well known, and easily verified, that if the Smith commutator $[R,S]^{\S}$ of two equivalence relations $R$ and $S$ is trivial, then the Huq commutator $[K,L]_{X}$ of their normalisations $K$ and $L$ is also trivial~\cite{BG}. It is also well known that, in general, the converse is false; there are counterexamples in the category of digroups~\cite{Borceux-Bourn, Bourn2004}, which is a variety of~$\Omega$-groups~\cite{Higgins} (and hence semi-abelian), and in the semi-abelian variety of loops~\cite{HVdL}. The requirement that the two commutators vanish together is known as the condition \defn{(SH)}. As explained in~\cite{MFVdL, HVdL}, it is important in the study of internal crossed modules~\cite{Janelidze}. The condition \SH\ holds for all action accessible categories~\cite{BJ07}, hence, in particular~\cite{Montoli}, for any \emph{category of interest} in the sense of Orzech~\cite{Orzech}.

In order to simplify our notations, we shall write $[K,L]^{\S}$ for the normalisation of the Smith commutator $[R,S]^{\S}$ (which coincides with the \emph{Ursini commutator of $K$ and~$L$} defined and studied in~\cite{Mantovani:Ursini}). The condition \SH\ for a semi-abelian category~$\X$ then amounts to the equality $[K,L]^{\S}=[K,L]_{X}$ for all $K$, $L\normal X$ in~$\X$.

The special case of central extensions is worth mentioning. A regular epimorphism $f\colon{X\to Y}$ with kernel $K$ is central in the above sense if and only if $[K,X]^{\S}=0$. It was shown in~\cite{Gran-VdL} that always $[K,X]^{\S}=[K,X]_{X}$, so centrality of~$f$ may be expressed as the vanishing of a Huq commutator, that is, as the condition $[K,X]_{X}=0$. Via the analysis in~\cite{Bourn-Gran}, this concept of central extension is also an instance of the notion coming from categorical Galois theory~\cite{Janelidze-Kelly}, namely the special case where one considers the Galois structure determined by the abelianisation functor.

\subsection{The Higgins commutator}\label{Higgins}
Central extensions may also be characterised in terms of the Higgins commutator~\cite{Higgins, Actions, MM-NC}, which is defined through a co-smash product. Given two objects $K$ and $L$ of $\X$, their \defn{co-smash product}~\cite{Smash}
\[
K\tensor L=\Ker\left(\left\langle\begin{smallmatrix}1_{K} & 0\\ 0 & 1_{L}\end{smallmatrix}\right\rangle\colon K+L\to K\times L\right)
\]
behaves as a kind of ``formal commutator'' of~$K$ and~$L$. In fact it is the Huq commutator of the two coproduct inclusions; see~\cite{Actions} and~\cite{MM-NC}. If $k\colon K \to X$ and $l\colon L \to X$ are subobjects of an object $X$, the \defn{Higgins commutator} $[K,L]\leq X$ is the subobject of $X$ given by the image of the induced composite morphism
\[
\xymatrix{K\tensor L \ar@{{ |>}->}[r]^-{\iota_{K,L}} \ar@{.{ >>}}[d] & K+L \ar[d]^-{\musgib{k}{l}}\\
[K,L] \ar@{{ >}.>}[r] & X.}
\]
If $K$ and $L$ are normal subobjects of $X$ and $K\join L=X$, then it turns out that the Higgins commutator $[K,L]$ is normal in $X$ and coincides with the Huq commutator. In particular, $[X,X]=[X,X]_{X}$. More generally, we always have $[K,X]=[K,X]_{X}$, so that a regular epimorphism $f\colon{X\to Y}$ is a central extension when either one of the three commutators $[K,X]=[K,X]_{X}=[K,X]^{\S}$ vanishes. In general the Huq commutator $[K,L]_{X}$ is the normal closure in $X$ of the Higgins commutator $[K,L]$. So, $[K,L]\leq [K,L]_{X}$ and $[K,L]=0$ if and only if $[K,L]_{X}=0$. An example in~\cite{AlanThesis} shows that in the category of non-associative rings the two commutators generally need not coincide. Thus the coincidence $[K,L]=[K,L]_{X}$ for all $K$, $L\normal X$ becomes a basic condition which a semi-abelian category may or may not satisfy; this condition, which we will denote by \defn{(NH)}, was introduced by Cigoli in his Ph.D. thesis~\cite{AlanThesis} and was studied further, by Cigoli together with the present authors, in~\cite{CGrayVdL1}.

\subsection{The ternary commutator}\label{ternary commutator}
The Higgins commutator does not preserve joins in general, but the defect may be measured precisely---it is a ternary commutator which can be computed by means of a ternary co-smash product. Let us extend the definition above: given a third subobject $m\colon M \to X$ of the object $X$, the \defn{ternary Higgins commutator} $[K,L,M]\leq X$ is the image of the composite
\[
\xymatrix@C=3em{K\tensor L\tensor M \ar@{{ |>}->}[r]^-{\iota_{K,L,M}} \ar@{.{ >>}}[d] & K+L+M \ar[d]^-{\muss{k}{l}{m}}\\ [K,L,M] \ar@{{ >}.>}[r] & X} 
\]
where $\iota_{K,L,M}$ is the kernel of
\[
\xymatrix@=6em{K+L+M \ar[r]^-{\left\langle\begin{smallmatrix} i_{K} & i_{K} & 0 \\
i_{L} & 0 & i_{L}\\
0 & i_{M} & i_{M}\end{smallmatrix}\right\rangle} & (K+L)\times (K+M) \times (L+M);}
\]
$i_k, i_L$ and $i_M$ denote the injection morphisms. The object $K\tensor L\tensor M$ is called the \defn{ternary co-smash product} of $K$,~$L$ and~$M$. Note that higher-order co-smash products and their associated commutators have been defined, but since we shall only use binary and ternary Higgins commutators, these will not be needed in this paper. Higgins commutators have good stability properties:

\begin{proposition}\cite{Actions,HVdL}\label{Proposition-Commutator-Rules}
For all $X_{1}$, $X_{2}$, $X_{3}\leq X$ and $M\leq X_{1}$ in $\X$, $n\in\{2,3\}$ and ${\sigma\in S_{n}}$ we have the following:
\begin{itemize}
\item Symmetry: $
[X_{\sigma^{-1}(1)},\dots,X_{\sigma^{-1}(n)}]= [X_{1},\dots,X_{n}]$.
\item Join decomposition: $[X_{1},X_{2}\join X_{3}]=[X_{1},X_{2}]\join [X_{1},X_{3}]\join [X_{1},X_{2},X_{3}]$.
\item Monotonicity: $[M,X_{2},\dots,X_{n}]\leq[X_{1}, \dots , X_{n}]$.
\item Removal of brackets: $[[X_{1},X_{2}],X_{3}]\leq [X_{1},X_{2}, X_{3}]$.
\item Removal of duplicates: if $X_{2}=X_{3}$ then $[X_{1},X_{2}, X_{3}]\leq [X_{1},X_{2}]$.\noproof
\end{itemize}
\end{proposition}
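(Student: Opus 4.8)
The plan is to deduce all five rules from the construction of the binary and ternary Higgins commutators as images of co-smash products, using three structural facts about the functor $\tensor$: that it is functorial in each variable; that it preserves regular epimorphisms in each variable (so that a regular epimorphism in one argument induces a regular epimorphism of co-smash products); and that $X_1\tensor(X_2+X_3)$ admits a decomposition exhibiting it as generated by (the images of) $X_1\tensor X_2$, $X_1\tensor X_3$ and the ternary product $X_1\tensor X_2\tensor X_3$. Because the image factorisation is monotone and unchanged under precomposition with a regular epimorphism, each rule then reduces to exhibiting a suitable comparison morphism between co-smash products which is compatible with the cotupling maps down into $X$.

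Three of the rules are then formal. Symmetry is immediate: a permutation of the summands of $X_1+\dots+X_n$ is an isomorphism restricting to an isomorphism of co-smash products, these being kernels of the permutation-equivariant comparison maps into the relevant products, and it commutes with the cotupling $\musgib{k}{l}$ (respectively its ternary analogue) into $X$, so the images coincide. Monotonicity is equally direct: a mono $M\leq X_1$ induces $M\tensor X_2\tensor\cdots\to X_1\tensor X_2\tensor\cdots$ by functoriality, and since $M\to X$ factors through $X_1\to X$ this morphism is compatible with the maps into $X$; monotonicity of images yields the containment. For removal of duplicates one folds the two equal summands: when $X_2=X_3$, the codiagonal $X_2+X_2\to X_2$ induces $X_1\tensor X_2\tensor X_2\to X_1\tensor X_2$, compatible with the maps into $X$ because the second and third inclusions agree, so the image of the ternary product lies inside that of the binary one.

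The two substantial rules are the join decomposition and the removal of brackets. For the former I would first replace $X_2\join X_3$ by the coproduct: the comparison $X_2+X_3\to X_2\join X_3$ is a regular epimorphism, hence so is $X_1\tensor(X_2+X_3)\to X_1\tensor(X_2\join X_3)$, so the image of $X_1\tensor(X_2\join X_3)$ in $X$ equals the image of $X_1\tensor(X_2+X_3)$. The decomposition of $X_1\tensor(X_2+X_3)$ then exhibits it as jointly generated by the canonical maps from $X_1\tensor X_2$, $X_1\tensor X_3$ and $X_1\tensor X_2\tensor X_3$; since these three map into $X$ precisely as $[X_1,X_2]$, $[X_1,X_3]$ and $[X_1,X_2,X_3]$, passing to images turns this object-level decomposition into the asserted join. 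For removal of brackets I would use that $X_1\tensor X_2\to[X_1,X_2]$ is a regular epimorphism, so $(X_1\tensor X_2)\tensor X_3\to[X_1,X_2]\tensor X_3$ is one too and $[[X_1,X_2],X_3]$ is the image of $(X_1\tensor X_2)\tensor X_3$ in $X$; it then suffices to produce a comparison $(X_1\tensor X_2)\tensor X_3\to X_1\tensor X_2\tensor X_3$ over $X$. This exists because the composite of $(X_1\tensor X_2)\tensor X_3\to X_1+X_2+X_3$ into $(X_1+X_2)\times(X_1+X_3)\times(X_2+X_3)$ vanishes: each of the three components is killed, either because $(X_1\tensor X_2)\tensor X_3$ lies in the kernel defining it as a co-smash product, or because the composite $X_1\tensor X_2\to X_1+X_2\to X_i$ collapsing one summand factors through $X_1\times X_2$ and hence is zero. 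The map therefore factors through the kernel defining the ternary co-smash product, giving the containment.

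I expect the main obstacle to be the decomposition of $X_1\tensor(X_2+X_3)$ underlying the join rule. Establishing that the three canonical maps are jointly regular epic requires a genuine analysis of the exactness of $\tensor$ — a kernel computation inside $X_1+X_2+X_3$ relating the objects defining the binary and ternary co-smash products — rather than the formal functoriality arguments that settle the other four rules. The construction of the comparison map for removal of brackets, which hinges on the non-associativity of $\tensor$ being controlled by the symmetric ternary product, is the secondary difficulty.
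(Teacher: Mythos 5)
This proposition is stated in the paper with a \verb|\noproof| mark and a citation to \cite{Actions,HVdL}: the paper contains no proof of its own, so the only meaningful comparison is with the proofs in those references --- and your outline does reconstruct their method (the calculus of binary and ternary co-smash products: functoriality, preservation of regular epimorphisms in each variable, and the decomposition of $X_1\tensor(X_2+X_3)$). Within that outline, your treatment of the four ``formal'' rules is correct and essentially complete. In particular, for removal of brackets, the verification that all three components of the map into $(X_1+X_2)\times(X_1+X_3)\times(X_2+X_3)$ vanish on $(X_1\tensor X_2)\tensor X_3$ is exactly the right computation, and for removal of duplicates the folding argument works because both composites $X_1+X_2+X_2\to X_1$ and $X_1+X_2+X_2\to X_2$ factor through the ``kill one summand'' morphisms whose kernels define the ternary co-smash.

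As a standalone proof, however, the proposal has a genuine gap, which you flag yourself but do not close. The join decomposition --- the one rule whose proof is more than formal bookkeeping --- rests entirely on the claim that the canonical images of $X_1\tensor X_2$, $X_1\tensor X_3$ and $X_1\tensor X_2\tensor X_3$ are jointly strongly epimorphic over $X_1\tensor(X_2+X_3)$ (equivalently, that the induced map from their coproduct is a regular epimorphism). This is not a consequence of anything you establish; it is precisely the central technical result of \cite{Actions} and \cite{HVdL} (the decomposition of the cross-effects of the identity functor), and without it the join rule is unproved. A second, smaller omission: the preservation of regular epimorphisms by $K\tensor(-)$, which you invoke twice (to replace $X_2\join X_3$ by $X_2+X_3$, and $[X_1,X_2]\tensor X_3$ by $(X_1\tensor X_2)\tensor X_3$), is listed as a ``structural fact'' but never argued, and it is not purely formal either: one writes $K\tensor L$ as the kernel of the split epimorphism $\Ker\bigl((1,0)\colon K+L\to K\bigr)\to L$, checks that $\Ker(K+L\to K)\to\Ker(K+L'\to K)$ is a pullback of $K+f$ and hence a regular epimorphism, and then uses that in a semi-abelian (indeed regular Mal'cev) category a morphism of points whose two components are regular epimorphisms is a regular pushout, so that the induced map on kernels is again a regular epimorphism. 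So: the skeleton is right and faithful to the cited literature, but the two load-bearing exactness ingredients are assumed rather than proved, and the first of them is where all the real work of the proposition lives.
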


\subsection{Two lemmas}\label{Joins of subobjects in semi-abelian categories}
We end this preliminary section with two known lemmas
\begin{lemma}\label{Lemma Joins of subobjects in semi-abelian categories}
For each $K\normal X$ and $S \leq X$ in $\X$, the join $K\join S \leq X$ can be constructed as the preimage of $S/(K\meet S) \leq X/K$ along ${X \to X/K}$ as in the diagram
\[
\xymatrix@!0@=2.5em{
 K\meet S \ar@{{ |>}->}[rrr]\ar@{{ >}->}[ddd] &&& S \ar@{{ >}->}[ddd]|-{\hole} \ar@{-{ >>}}[rrr] \ar@{{ >}->}[dr] &&& S/(K\meet S) \ar@{{ >}->}[ddd]\\
 &&&& K\join S \sqpullback\ar@{-{ >>}}[urr]\ar@{{ >}->}[ddl] &\\
 &&&&\\
 K \ar@{{ |>}->}[rrr] \ar@{{ |>}->}[uurrrr] &&& X \ar@{-{ >>}}[rrr] &&& X/K.
}
\]
Moreover, when $S$ is normal in $X$, the join $K \join S$, being the preimage of the image of a normal subobject, is normal in $X$.\noproof
\end{lemma}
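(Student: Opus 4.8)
The plan is to write $q\colon X\to X/K$ for the cokernel of $k\colon K\to X$, so that $K=\Ker(q)$, and to recall that the regular image $q(S)$ of $S$ along~$q$ is precisely $S/(K\meet S)$, since the kernel of the composite $S\to X\to X/K$ is $K\meet S$. Writing $P$ for the preimage (pullback) of $S/(K\meet S)$ along~$q$, the whole statement amounts to the equality $P=K\join S$ of subobjects of~$X$, together with the observation that $P$ is normal whenever $S$ is.

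For the inclusion $K\join S\leq P$ I would argue that both generators factor through~$P$: the composite $K\to X\to X/K$ is zero and hence factors through $0\leq S/(K\meet S)$, while $S\to X\to X/K$ factors through its own image $S/(K\meet S)$ by construction; in either case the universal property of the pullback $P$ yields the required factorisation, so $K\leq P$ and $S\leq P$, whence $K\join S\leq P$.

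The substantial direction is $P\leq K\join S$, and here the key point is that $K\join S$ already contains the kernel $K$ of~$q$. Since the direct image functor along~$q$ is a left adjoint to pullback, it preserves joins, so the image of $K\join S$ in $X/K$ is $q(K)\join q(S)=0\join S/(K\meet S)=S/(K\meet S)$; as $K\leq K\join S$, the kernel of the restriction $q|_{K\join S}$ is exactly~$K$, giving a short exact sequence $K\to K\join S\twoheadrightarrow S/(K\meet S)$. On the other hand, pulling back the regular epimorphism~$q$ along the monomorphism $S/(K\meet S)\hookrightarrow X/K$ produces a regular epimorphism $P\twoheadrightarrow S/(K\meet S)$ whose kernel is again~$K$, so that $K\to P\twoheadrightarrow S/(K\meet S)$ is short exact as well. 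The inclusion $K\join S\hookrightarrow P$ is a morphism between these two short exact sequences inducing the identity on both $K$ and $S/(K\meet S)$, so by the short five lemma---valid in any semi-abelian, indeed any pointed protomodular, category---it is an isomorphism. Hence $K\join S=P$, as claimed. I expect this application of the short five lemma to the two short exact sequences to be the main obstacle, essentially a categorical form of the correspondence theorem: every subobject containing $\Ker(q)$ is recovered as the preimage of its image.

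For the final assertion, suppose $S\normal X$. Then its direct image $S/(K\meet S)$ is normal in $X/K$, since direct images of normal subobjects along regular epimorphisms are again normal in a semi-abelian category; and the preimage of a normal subobject along any morphism is normal, being the kernel of the composite with the defining cokernel. Thus $P=K\join S$ is normal in~$X$.
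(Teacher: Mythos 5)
Your proof is correct. The paper treats this as a known lemma and states it without proof, so there is no argument of its own to compare against; your reconstruction---identifying $q(S)$ with $S/(K\meet S)$ via the image factorisation, exhibiting both $K\join S$ and the preimage $P$ as middle terms of short exact sequences with kernel $K$ and quotient $S/(K\meet S)$, and concluding $K\join S=P$ by the short five lemma (valid in any homological, hence any semi-abelian, category)---is a standard and complete way to establish it, and the auxiliary facts you invoke (direct images along regular epimorphisms preserve joins and preserve normality, preimages of normal subobjects are kernels of composites) all hold in semi-abelian categories. Your normality argument is moreover exactly the one the statement itself gestures at: the join is ``the preimage of the image of a normal subobject''.
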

\begin{lemma}\cite[Lemma~2.6]{CGrayVdL1}\label{Lifting split extensions}
In a semi-abelian category, consider a point with chosen kernel as in bottom row of the diagram
\[
\xymatrix{0 \ar@{.>}[r] & K' \ar@{{ |>}->}[d]_-{\kappa} \ar@{{ |>}.>}[r] & X' \ar@{.>}[d] \ar@{.{ >>}}@<.5ex>[r] & Y \ar@{:}[d] \ar@{.>}@<.5ex>[l] \ar@{.>}[r] & 0\\
0 \ar[r] & K \ar@{{ |>}->}[r]_-{k} & X \ar@<.5ex>@{-{ >>}}[r]^-{f} & Y \ar@<.5ex>[l]^-{s} \ar[r] & 0}
\]
such that $k\comp \kappa$ is normal. Then this point lifts along $\kappa\colon K'\to K$ to yield a morphism of points with chosen kernels.\noproof
\end{lemma}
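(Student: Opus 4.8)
The plan is to realise the desired object $X'$ as a subobject of $X$, namely as the join of the normal subobject $K'$ with the image of the splitting $s$. Writing $k\comp\kappa\colon K'\to X$ for the given normal monomorphism, I would set
\[
X'\;=\;(k\comp\kappa)(K')\join s(Y)\;\leq\;X,
\]
take $\theta\colon X'\to X$ to be the inclusion, let $f'$ be the restriction $f\comp\theta$ of $f$ to $X'$, and let $s'$ be $s$ corestricted to $X'$, which is legitimate since $s(Y)\leq X'$. As $f's'=1_{Y}$, the map $f'$ is a split, hence regular, epimorphism, and the only thing requiring genuine proof is that its kernel is exactly $K'$.

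First I would record that $K'$ and $s(Y)$ meet trivially. Indeed $K'\leq K=\Ker f$, while the image of a splitting always meets the kernel of the corresponding split epimorphism in $0$; hence $(k\comp\kappa)(K')\meet s(Y)\leq K\meet s(Y)=0$. Next, since $K'$ is normal in $X$, I would apply Lemma~\ref{Lemma Joins of subobjects in semi-abelian categories} to describe $X'$ as the preimage along the quotient $q\colon X\to X/K'$ of the image $\overline{s(Y)}$ of $s(Y)$ in $X/K'$; the meet appearing in that lemma vanishes by the previous step. The kernel of $f'$ is $X'\meet K$, and because $K'\leq K=\Ker q$ we have $K=q^{-1}(K/K')$, so that
\[
X'\meet K\;=\;q^{-1}\bigl(\overline{s(Y)}\bigr)\meet q^{-1}(K/K')\;=\;q^{-1}\bigl(\overline{s(Y)}\meet (K/K')\bigr).
\]
Now the morphism $\bar f\colon X/K'\to Y$ induced by $f$ has kernel $K/K'$ and is split by $q\comp s$, whose image is $\overline{s(Y)}$; as the image of a splitting this again meets the kernel $K/K'$ trivially, whence $\overline{s(Y)}\meet(K/K')=0$ and therefore $X'\meet K=q^{-1}(0)=K'$.

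Finally I would assemble the data. With $k'\colon K'\to X'$ the kernel inclusion just identified and $\phi=1_{Y}$, the equalities $\theta k'=k\kappa$, $\phi f'=f\theta$ and $\theta s'=s\phi$ all hold, since every map involved is either a subobject inclusion or a corestriction of $f$ and $s$; this produces the required morphism of points with chosen kernels. The only delicate point is the kernel computation, and it is exactly there that the hypothesis that $k\comp\kappa$ be normal is used: without the normality of $K'$ in $X$ one cannot invoke Lemma~\ref{Lemma Joins of subobjects in semi-abelian categories} to recognise the join $X'$ as a preimage along $q$, and the clean intersection $X'\meet K=K'$ would not be available. I expect this to be the main obstacle, the remaining verifications being formal.
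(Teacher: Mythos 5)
Your proof is correct; note, however, that the paper itself contains no argument for this lemma---it is imported, stated without proof, from \cite[Lemma~2.6]{CGrayVdL1}---so the comparison is with that reference rather than with anything in the text. Your construction stays entirely at the level of subobjects of $X$: you realise $X'$ as the join $K'\join s(Y)$, and the whole weight of the proof rests on the computation $\Ker(f')=X'\meet K=K'$, which you obtain from Lemma~\ref{Lemma Joins of subobjects in semi-abelian categories} (legitimately applicable precisely because $k\comp\kappa$ is assumed normal), the fact that inverse image along $q\colon X\to X/K'$ preserves meets, and the observation that the image of a splitting meets the corresponding kernel trivially, applied to the induced point $(\bar f, q\comp s)$ on $X/K'$. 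All of these steps are valid in any homological category, so the argument is sound. The route more usual in this circle of ideas---and the one matching the toolkit of \cite{CGrayVdL1} and of the conjugation argument in the proof of Proposition~\ref{Proposition Independence of surrounding object [K,K]}---goes instead through the equivalence between points over $Y$ and internal $Y$-actions \cite{BJK}: normality of $k\comp\kappa$ says exactly that the conjugation action of $X$ restricts to $K'$; composing with $s$ then restricts the $Y$-action on $K$ to a $Y$-action on $K'$, and the corresponding point $K'\rtimes Y\rightleftarrows Y$, together with $\kappa$ viewed as a morphism of actions, yields the required morphism of points. That argument is shorter but invokes the actions machinery; yours is more elementary and explicitly exhibits $X'$ as a subobject of $X$, which is how one pictures it for groups ($X'=K'\cdot s(Y)$). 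One small slip to repair: you justify $K=q^{-1}(K/K')$ by writing ``$K'\leq K=\Ker q$'', but $\Ker q=K'$, not $K$; the correct justification is that $K$ \emph{contains} $\Ker q=K'$, whence $q^{-1}(q(K))=K\join\Ker q=K$. With that fixed, the proof is complete.
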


\section{Peri-abelian categories and the condition \WNH}\label{Section (WNH) and (PA)}

In this section we consider a weakening of the condition \NH\ from Subsection~\ref{Higgins}. Instead of requiring that Higgins commutators of pairs $K$, $L\normal X$ of normal subobjects are normal, we require this only in the special case where $K=L$. This condition is closely related to the concept of \emph{peri-abelian} category introduced in~\cite{Bourn-Peri}.

Following~\cite{CGrayVdL1}, we write $K\protosplit X$ if $K\normal X$ is the kernel of a split extension as in
\begin{equation}\label{split extension}\tag{$\star$}
\xymatrix{
0 \ar[r] & K \ar@{{ |>}->}[r]^-{k} & X \ar@<.5ex>@{-{ >>}}[r]^-{f} & Y \ar@<.5ex>[l]^-{s} \ar[r] & 0}
\end{equation}
and call $K$ a \defn{protosplit} normal subobject of $X$.

\begin{proposition}\label{Proposition Independence of surrounding object [K,K]}
For a semi-abelian category, the following are equivalent, and determine a condition which we shall denote by {\rm\bf (WNH)}:
\begin{enumerate}
\item if $K\normal X$ then $[K,K] \normal X$;
\item if $K \normal X$ then $[K,K]=[K,K]_X$;
\item if $K\normal X$ then $[K,K]_{K}\normal X$;
\item if $K\protosplit X$ then $[K,K]_{K}\normal X$;
\item each point with chosen kernel~\eqref{split extension} lifts to a morphism of points with chosen kernels
\[
\xymatrix{0 \ar[r] & [K,K]_K \ar@{{ |>}->}[d]_-{\mu_K} \ar@{{ |>}->}[r]^-{k'} & X' \ar@{->}[d]_-{x} \ar@{-{ >>}}@<.5ex>[r]^{f'} & Y \ar@{=}[d] \ar@{->}@<.5ex>[l]^{s'} \ar@{->}[r] & 0\\
0 \ar[r] & K \ar@{{ |>}->}[r]_-{k} & X \ar@<.5ex>@{-{ >>}}[r]^-{f} & Y \ar@<.5ex>[l]^-{s} \ar[r] & 0;}
\]
\item each point with chosen kernel~\eqref{split extension} induces a morphism of points with chosen kernels
\[
\xymatrix{
0 \ar[r] & K \ar@{{ |>}->}[r]^-{k} \ar@{-{ >>}}[d]_-{\eta_{K}=\coker \mu_{K}} & X \ar@<.5ex>@{-{ >>}}[r]^-{f} \ar@{->}[d]_-{q} & Y \ar@<.5ex>[l]^-{s} \ar[r] \ar@{=}[d] & 0\\
0 \ar[r] & K/[K,K]_K \ar@{{ |>}->}[r]_-{\widetilde k} & \widetilde X \ar@{-{ >>}}@<.5ex>[r]^{\widetilde f} & Y \ar@{->}@<.5ex>[l]^{\widetilde s} \ar@{->}[r] & 0;}
\]
\item each action on $K$ restricts to an action on $[K,K]_K$;
\item each action on $K$ induces an action on $K/[K,K]_K$.
\end{enumerate}
\end{proposition}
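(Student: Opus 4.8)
The plan is to handle the three ``absolute'' conditions (i), (ii), (iii) by pure commutator theory, to show that the five ``relative'' conditions (iv)--(viii) (which only speak about split extensions) are all equivalent to (iv) by means of the point/action correspondence, and finally to bridge the two worlds with a single kernel-pair construction.

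First I would dispose of (i)$\Leftrightarrow$(ii)$\Leftrightarrow$(iii). The point is that the Higgins commutator $[K,K]$ formed in $X$ agrees with the Huq commutator $[K,K]_K$ formed inside $K$: the defining composite $K\tensor K\to K+K\to X$ factors through the codiagonal $K+K\to K$, so $[K,K]$ is the image in $X$ of the Higgins commutator of $K$ with itself computed in $K$, and since $K\join K=K$ this inner Higgins commutator is $[K,K]_K$ (the case $[X,X]=[X,X]_X$ of Subsection~\ref{Higgins}, applied to $K$). Thus (i) and (iii) assert literally the same subobject of $X$. Because $[K,K]_X$ is the normal closure of $[K,K]$ in $X$, condition (i) says exactly that this normal closure is already $[K,K]$, which is (ii); so (i)$\Leftrightarrow$(ii)$\Leftrightarrow$(iii).

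Next I would treat (iv)--(viii) via the equivalence between points with chosen kernel and internal actions, writing a point $0\to K\to X\rightleftarrows Y\to 0$ as a semidirect product $X\cong K\rtimes Y$. The structural fact I would rely on is that a subobject $K'\leq K$ with $K'\normal K$ is normal in $X$ if and only if the action of $Y$ on $K$ restricts to $K'$. Since $[K,K]_K\normal K$ always holds, taking $K'=[K,K]_K$ identifies (iv) with (vii): ``$[K,K]_K\normal X$'' is the same as ``the action restricts to $[K,K]_K$''. Functoriality of the point/action correspondence then gives (v)$\Leftrightarrow$(vii) and (vi)$\Leftrightarrow$(viii), a sub-point (resp.\ quotient point) over $\mathrm{id}_Y$ being precisely a restricted (resp.\ induced) action; in the direction (iv)$\Rightarrow$(v) one may instead invoke Lemma~\ref{Lifting split extensions} applied to $\mu_K\colon [K,K]_K\to K$. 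Finally (iv)$\Rightarrow$(vi) is obtained by forming the cokernel $q\colon X\to X/[K,K]_K$, while (vi)$\Rightarrow$(iv) follows because the comparison $q$ satisfies $\Ker q\leq \Ker f=K$ and $\Ker q\meet K=[K,K]_K$, whence $\Ker q=[K,K]_K\normal X$. Thus (iv)--(viii) are mutually equivalent.

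It then remains to connect the two groups, for which (iii)$\Rightarrow$(iv) is the trivial restriction to protosplit $K$. For the converse, given an arbitrary $K\normal X$, I would pass to the kernel pair $\Eq(q)$ of $q=\coker k\colon X\to X/K$: the first projection $p_{1}\colon \Eq(q)\to X$, split by the diagonal, is a split extension with kernel $K$, so $K\protosplit \Eq(q)$ and (iv) yields $[K,K]_K\normal \Eq(q)$. The second projection $p_{2}\colon \Eq(q)\to X$ is a regular epimorphism whose restriction to $K=\Ker p_{1}$ is the original inclusion $k$, so, using the standard fact that regular direct images of normal subobjects are normal, $p_{2}$ carries $[K,K]_K$ to the subobject $[K,K]_K\leq X$ and exhibits it as normal, which is (iii). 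The main obstacle is exactly this last passage: conditions (iv)--(viii) detect only semidirect products, and the substance of the proposition is that $[K,K]_K$ remains normal even when $K\normal X$ fails to split; the kernel-pair device $\Eq(q)\to X$ together with stability of normality under regular images is what transports the protosplit information back to $X$. I would also expect the structural fact used above---that for a subobject of the kernel, normality in the semidirect product means normality in the kernel plus stability under the action---and the verification (via the short five lemma) that the comparison maps of the point/action correspondence are genuine monomorphisms, so that the lifted data really sits inside the original point, to require the most care.
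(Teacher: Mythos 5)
Much of your proposal is sound and in fact close to the paper's argument: the treatment of (i)--(iii), the use of the point/action equivalence for (v)\,\equiv\,(vii) and (vi)\,\equiv\,(viii), the derivation of (v) from (iv) via Lemma~\ref{Lifting split extensions}, your direct kernel argument for (vi)\,\implies\,(iv) (which the paper does not state explicitly and which is correct), and the kernel-pair construction for (iv)\,\implies\,(iii) --- which is precisely the point-theoretic incarnation of the conjugation action the paper uses --- are all fine.

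The gap is the ``structural fact'' you rely on: that for $K'\leq K$ with $K'\normal K$, in a point $X\cong K\rtimes Y$, one has $K'\normal X$ if and only if the $Y$-action on $K$ restricts to $K'$. The ``if'' direction --- the one you need for (vii)\,\implies\,(iv), which is your only route from (v)/(vii) back to the other conditions --- is not a theorem of semi-abelian categories; it is a group-theoretic fact whose usual proof is elementwise. In commutator terms: $K'\normal X$ means $[K',X]\leq K'$ (the normal closure of $K'$ being $K'\join [K',X]$), and by the join decomposition of Proposition~\ref{Proposition-Commutator-Rules}, writing $X=K\join s(Y)$, one has $[K',X]=[K',K]\join [K',s(Y)]\join [K',K,s(Y)]$. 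The hypotheses ``$K'\normal K$'' and ``the $Y$-action restricts to $K'$'' control $[K',K]$, $[K',s(Y)]$ and $[K',K',s(Y)]$, but \emph{not} the ternary term $[K',K,s(Y)]$; in a general semi-abelian category ternary commutators do not decompose into binary ones (this failure is exactly the theme of the ternary-obstruction results of~\cite{HVdL} which the paper invokes elsewhere), so the implication breaks down. For groups it works because there $[K',K,s(Y)]$ lies in a join of iterated binary commutators, which is why the fact looks innocuous. The paper avoids it by using a genuinely different, standard fact: $S\leq X$ is normal in $X$ if and only if the conjugation action \emph{of $X$ itself} (not of a complement $Y$) restricts to $S$; it then closes the cycle via (vii)\,\implies\,(iii), applying (vii) to the conjugation action of $X$ on $K$. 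Your own kernel-pair device is exactly this action in disguise, so the repair is small: drop the structural fact and argue that conjugation of $X$ on $X$ restricts to $K$ (since $K\normal X$), hence by (vii) restricts further to $[K,K]_K$, whence $[K,K]_K\normal X$. As written, however, your web of implications leaves (v) and (vii) only as consequences of the remaining conditions, never as hypotheses, so the eight-fold equivalence is not established.
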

\begin{proof}
The equivalence of (i), (ii) and (iii) follows from the fact that the Huq commutator is always the normal closure of the Higgins commutator, and that the Higgins commutator of normal subobjects is normal in the join of those subobjects. (iv) is a special case of (iii). The fact that the conditions (v) and (vii) are equivalent and the conditions (vi) and (viii) are equivalent follows from the equivalence of categories between actions and points from \cite{BJK}.
The implication (vi)~\implies~(v) follows from the fact that the functor $\Ker\colon \Pt_{Y}(\X) \to \X$ preserves limits and in particular kernels. We will show that (iv)~\implies~(vi). Suppose that (iv) holds and that~\eqref{split extension} is a point with chosen kernel. From Lemma \ref{Lifting split extensions} we obtain the morphism of points as in (v) where $k \comp\mu_K$ is normal in $\X$ and so $x$ is normal in $\Pt_{Y}(\X)$. The induced morphism of points as in (vi) is obtained by taking the cokernel of $x$ in $\Pt_{Y}(\X)$. 
Finally, the last implication (vii)~\implies~(iii) follows from the fact that a subobject $S\leq X$ is normal in $X$ if and only if the conjugation on~$X$ restricts to $S$. Indeed, the conjugation action on $X$ restricts to an action on $K$ which by assumption restricts further to an action on $[K,K]_K$ as required.
\end{proof}

Note that for the category of groups, Condition (vii) of Proposition \ref{Proposition Independence of surrounding object [K,K]} corresponds to the fact that the commutator $[K,K]_K$ is a characteristic subgroup of~$K$: being invariant under automorphisms means precisely that every action on $K$ restricts to an action on $[K,K]_K$. This observation can be extended to arbitrary semi-abelian categories when the definition of characteristic subobject from \cite{CigoliMontoliCharSubobjects} is used; see \cite{CGrayVdL1} for the proof under the condition \NH.

\begin{proposition}\label{prop: arithmetical}
Every arithmetical category satisfies \WNH.
\end{proposition}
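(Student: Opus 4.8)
The plan is to deduce \WNH\ from a single structural consequence of arithmeticity, namely that \emph{every object is perfect}. Once this is established, condition~(i) of Proposition~\ref{Proposition Independence of surrounding object [K,K]} will follow almost immediately for any $K\normal X$, since $K\join K=K$ forces the Higgins commutator $[K,K]$ to agree with a commutator computed inside $K$, where perfectness can be applied.

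Recall that a semi-abelian category is arithmetical exactly when the Smith commutator of any two equivalence relations $R$, $S$ on a common object is as large as possible, that is, $[R,S]^{\S}=R\meet S$ (in general only $[R,S]^{\S}\leq R\meet S$ holds). First I would specialise this to $R=S=\nabla_Z$ for an arbitrary object $Z$, which gives $[\nabla_Z,\nabla_Z]^{\S}=\nabla_Z$. Passing to normalisations and using that the zero-class of $\nabla_Z=\Eq(Z\to 0)$ is $Z$ itself, this reads $[Z,Z]^{\S}=Z$ in the notation of Subsection~\ref{(SH)}.

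Next I would invoke the coincidence results recorded in Subsections~\ref{Higgins} and~\ref{(SH)}: whenever one of the two arguments is the whole surrounding object, the three commutators agree, so $[Z,Z]=[Z,Z]_Z=[Z,Z]^{\S}$. Combined with the previous step this yields $[Z,Z]=[Z,Z]_Z=Z$, i.e.\ every object $Z$ is perfect. The one point where care is needed --- and the only genuine obstacle --- is to compute \emph{inside} $Z$, where both arguments coincide with the ambient object, rather than attempting to locate $[K,K]$ between $[K,K]_X$ and $[K,K]^{\S}$ in a larger object $X$: there the standard inequalities $[K,K]\leq[K,K]_X\leq[K,K]^{\S}$ run in the unhelpful direction and carry no information. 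Everything else is formal.

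Finally I would fix $K\normal X$ and apply perfectness to the object $K$. Because $K\join K=K$, the Higgins commutator $[K,K]$ coincides with the Huq commutator $[K,K]_K$; identifying subobjects of $K$ with their images along the monomorphism $k$, this says precisely that $[K,K]=[K,K]_K=K$ as a subobject of $X$. In particular $[K,K]=K\normal X$, which is condition~(i) of Proposition~\ref{Proposition Independence of surrounding object [K,K]}, and therefore \WNH\ holds.
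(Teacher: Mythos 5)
Your proposal is correct, and it turns on the same underlying phenomenon as the paper's proof---in an arithmetical category commutators are as large as possible---but it reaches \WNH\ by a genuinely different route. The paper's proof is a one-liner: it cites \cite[Corollary~1.11.13]{Borceux-Bourn} for the fact that $[K,K]_X=K\meet K=K$ for every $K\normal X$; applying this with ambient object $K$ itself (i.e.\ to $K\normal K$) gives $[K,K]_K=K$, so that $[K,K]=[K,K]_K=K$ is normal in $X$. You instead take as input the Smith-commutator characterisation of arithmeticity, $[R,S]^{\S}=R\meet S$ (a result of Pedicchio which is not among the paper's references, so it would need a citation---note that only the implication ``arithmetical $\Rightarrow$ commutator $=$ meet'' is actually used, not the stated equivalence), specialise to $R=S=\nabla_Z$, and transfer to the Higgins commutator via the coincidences $[Z,Z]=[Z,Z]_Z=[Z,Z]^{\S}$ recorded in Subsections~\ref{Higgins} and~\ref{(SH)}. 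This isolates a clean intermediate statement---in an arithmetical semi-abelian category every object is perfect---after which \WNH\ follows, since $[K,K]=[K,K]_K=K\normal X$ is condition (i) of Proposition~\ref{Proposition Independence of surrounding object [K,K]}. What your route buys: apart from that one external characterisation it runs entirely on facts recorded in the paper's preliminaries, and it makes explicit a point the paper's terse proof leaves implicit, namely that the self-commutator must be computed \emph{inside} $K$, because the inequalities $[K,K]\leq[K,K]_X\leq[K,K]^{\S}$ in the ambient object $X$ point the wrong way, so knowing $[K,K]_X=K$ alone does not bound the Higgins commutator from below. What the paper's route buys is brevity: a single citation at the Huq level does all the work.
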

\begin{proof}
This is due to the fact that in an arithmetical category for each $K \normal X$ the commutator $[K,K]_X=K\meet K =K$, which follows essentially from \cite[Corollary~1.11.13]{Borceux-Bourn}.
\end{proof}
\begin{proposition}
For a semi-abelian category $\X$ the conditions \WNH\ and strong protomodularity are independent.
\end{proposition}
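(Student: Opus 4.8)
The plan is to establish \textbf{independence} by producing two separate witnessing categories: one which satisfies \WNH\ but is not strongly protomodular, and one which is strongly protomodular but fails \WNH. Since independence of two conditions means that neither implies the other, I would organise the proof as two lemmas, each exhibiting an explicit semi-abelian variety as counterexample. The overall strategy is to mine known examples from the literature on strong protomodularity (originating with Bourn) and on the failure of the Higgins-commutator normality conditions (the examples in \cite{AlanThesis} of non-associative rings).

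For the direction ``\WNH\ does not imply strong protomodularity'', I would look for a semi-abelian category in which \WNH\ holds for cheap reasons but strong protomodularity is known to fail. The most economical route is to invoke Proposition~\ref{prop: arithmetical}: every arithmetical category satisfies \WNH. Hence it suffices to name an \emph{arithmetical} semi-abelian category that is \emph{not} strongly protomodular. A natural candidate is a suitable variety of (commutative, or Heyting-type) algebras; arithmetical varieties are abundant, and one should be recorded in the literature as failing strong protomodularity. First I would verify the category is semi-abelian and arithmetical (so \WNH\ is automatic by Proposition~\ref{prop: arithmetical}), and then cite or check the failure of strong protomodularity. The work here is mostly bibliographic: finding the right named variety and the reference establishing non-strong-protomodularity.

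For the converse direction ``strong protomodularity does not imply \WNH'', I would use a category of \emph{non-associative rings} (or a closely related variety of non-associative algebras), since the excerpt already flags, via \cite{AlanThesis}, that in such categories the Higgins commutator $[K,L]$ need not be normal and can differ from $[K,L]_X$. The task is to pin down a specific such variety that is known to be strongly protomodular yet contains a normal subobject $K\normal X$ with $[K,K]$ \emph{not} normal in $X$, so that condition~(i) of Proposition~\ref{Proposition Independence of surrounding object [K,K]} fails. I would present the explicit $X$ and $K\normal X$ from \cite[Example~5.3.7]{AlanThesis} (or an adaptation), compute $[K,K]$ and check directly that it is not a normal subobject, thereby refuting \WNH; strong protomodularity of the ambient variety would be cited or verified from the defining identities.

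The main obstacle will be the second direction: assembling an example that \emph{simultaneously} satisfies strong protomodularity and violates \WNH. Strong protomodularity is a genuine restriction, so I cannot take an arbitrary non-associative variety off the shelf---I must confirm that the chosen variety actually is strongly protomodular (e.g.\ by exhibiting it as a category of interest, or verifying Bourn's criterion that every normal subobject is a Bourn-normal monomorphism in the strong sense) while still having enough non-associativity for the Higgins commutator to escape normality. Balancing these two features is the delicate point; the first direction, by contrast, is essentially immediate once Proposition~\ref{prop: arithmetical} is in hand and an arithmetical non-strongly-protomodular example is located.
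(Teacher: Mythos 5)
Your overall architecture --- two witnessing varieties, with Proposition~\ref{prop: arithmetical} handling the direction ``\WNH\ does not imply strong protomodularity'' and non-associative rings handling the converse --- is exactly the paper's, but you have located the difficulty in the wrong place, and the place where the real work sits is precisely where your proposal has a gap. The direction you call delicate is in fact pure citation: the category of non-associative rings (equivalently, of non-associative algebras) is already known to be strongly protomodular (cf.\ Example~\ref{Example NAA} and \cite{Bourn-SP}), and its failure of \WNH\ is exactly \cite[Example~5.3.7]{AlanThesis}, reproduced as \cite[Example~5.4]{CGrayVdL1}; there is nothing to balance, construct or recompute there, and the paper disposes of this half in one sentence by citing those two facts.

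The genuine gap is in your first direction. You propose a bibliographic search for an arithmetical semi-abelian variety ``recorded in the literature as failing strong protomodularity'', but there is no reason for that search to succeed: the standard arithmetical semi-abelian examples (Boolean rings, for instance) sit inside the strongly protomodular category of rings as subvarieties, hence are themselves strongly protomodular, and the fact that the paper builds an ad hoc variety rather than citing one indicates that no off-the-shelf example was available. The paper's actual content is this construction: take sets with an exponent-two abelian group structure and a commutative, idempotent binary operation $\cdot$ satisfying $x\cdot 0=0$; check arithmeticity by exhibiting the Mal'cev-style ternary operation $p(x,y,z)=x+y+x\cdot y+x\cdot z+y\cdot z$, so that \WNH\ follows from Proposition~\ref{prop: arithmetical}; then refute strong protomodularity by an explicit morphism of points whose kernel-level comparison is a normal monomorphism while the middle comparison $u$ is not normal as a monomorphism of points. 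The latter is detected concretely: the composite $u\comp\langle 1_X,0\rangle$ has image $\{(x,0,0)\}$ in an eight-element algebra $C$, and since $(1,0,0)\cdot(1,1,1)=(1,1,0)$ lies outside this image while any cokernel of $u\comp\langle 1_X,0\rangle$ must kill it, the image is not the kernel of its cokernel, hence not normal. Without this construction --- or an actually existing reference replacing it --- your first direction is an intention rather than a proof, so the proposal as written does not establish the proposition.
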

\begin{proof}
Let us consider the category $\X$ whose objects are sets equipped with the structure of an abelian group with operations denoted by $0$, $+$, $-$,  together with a binary operation $\cdot$ satisfying:
\begin{align*}
x+x=0, && x\cdot x =x, && x\cdot y=y\cdot x, && \text{and} && x\cdot 0=0.
\end{align*}
The morphisms in $\X$ are (as usual) the structure preserving maps. According to Proposition~2.9.2, Proposition~2.9.11 and Definition~2.9.13 in \cite{Borceux-Bourn}, to show that $\X$ is arithmetical it is sufficient to find a ternary operation $p$ which satisfies:
\begin{align*}
p(x,y,y)=x, && p(x,x,y)=y && \text{and} && p(x,y,x)=x.
\end{align*}
It is easy to check that $p(x,y,z) = x + y + x\cdot y + x\cdot z + y\cdot z$ has the desired properties, and hence by Proposition \ref{prop: arithmetical}, $\X$ satisfies \WNH. It remains only to show that $\X$ is not strongly protomodular. For that let $X$ be the boolean ring with two elements and consider the diagram
\[
\xymatrix{
X \ar[r]^-{\langle 1_{X},0\rangle}\ar[d]_{\langle 1_{X},0\rangle} & X\times X \ar@<0.5ex>[r]^-{\pi_2}\ar[d]_{u} & X \ar@<0.5ex>[l]^-{\langle 0,1_{X}\rangle} \ar@{=}[d]\\
X\times X \ar[r]_-{k} & C \ar@<0.5ex>[r]^-{p} & X\ar@<0.5ex>[l]^-{s}
}
\]
in $\X$ where $C$ is the object in $\X$ with underlying abelian group $X\times X\times X$ and with $\cdot$ defined by
\[
(x,y,z)\cdot (a,b,c) =
\begin{cases}
(xa,yb,zc) & \text{if $(x,y,z)=(a,b,c)$, $y=b=0$, $z=c=0$,}\\
&\text{$(x,y,z)=(0,0,0)$, or $(a,b,c)=(0,0,0)$;}\\
(1,1,zc) & \text{otherwise}
\end{cases}
\]
and $u$, $k$, $p$ and $s$ are defined by
\begin{align*}
u(x,y) = (x,0,y), && k(x,y) = (x,y,0), && p(x,y,z)=z && \text{and} && s(x) = (0,0,x),
\end{align*}
respectively.
It is easy to check that the above diagram is a morphism of points with chosen kernels where the morphism between the kernels is a normal monomorphism. To show that $u$ is not normal as a monomorphism of points it is sufficient to show that the composite $u\comp \langle 1_{X},0\rangle$ is not normal (as a monomorphism in $\X$) \cite{Gerstenhaber,Bourn-SP,Rodelo:Moore}. Taking into account that every normal monomorphism is the kernel of its cokernel, since $(1,0,0) \cdot (1,1,1) = (1,1,0)\notin u (\langle 1_{X},0\rangle(X))$ but the cokernel of $u\comp \langle 1_{X},0\rangle$ will send $(1,0,0)$ to $0$ and hence $(1,0,0)\cdot (1,1,1)$ to $0$, it follows that $u\comp \langle 1_{X},0\rangle$ is not normal. 

The fact that strong protomodularity does not imply \WNH\ follows from the fact that the category of non-associative rings is strongly protomodular but does not satisfy \WNH ~\cite[Example~5.3.7]{AlanThesis} \cite[Example~5.4]{CGrayVdL1}.
\end{proof}

\subsection{Peri-abelian categories}
A semi-abelian category $\X$ is said to be \defn{peri-abelian}~\cite{Bourn-Peri} if and only if for any $f\colon{X\to Y}$ in $\X$, the change of base functor
\[
f^{*}\colon{\Pt_{Y}(\X)\to \Pt_{X}(\X)}
\]
commutes with abelianisation. (The original definition of~\cite{Bourn-Peri} is actually given in a wider context, but we shall only consider the case of semi-abelian categories.)

\begin{proposition}\label{(SH) + (WNH) => (PA)}
For a semi-abelian category $\X$, the following are equivalent:
\begin{enumerate}
\item $\X$ is peri-abelian;
\item for any object $Y$ of $\X$, the diagram
\[
\xymatrix{\Pt_{Y}(\X) \ar[d]_-{\Ker} \ar[r]^-{\ab} & \Ab(\Pt_{Y}(\X)) \ar[d]^-{\Ker}\\
\X \ar[r]_-{\ab} & \Ab(\X)}
\]
commutes;
\item for any object $Y$ of $\X$, the square of left adjoint functors
\[
\xymatrix{\Pt_{Y}(\X) \ar[r]^-{\ab} & \Ab(\Pt_{Y}(\X)) \\
\X \ar[r]_-{\ab} \ar[u]|-{F=Y+(-)} & \Ab(\X) \ar[u]_-{G}}
\]
satisfies the Beck--Chevalley condition;
\item if $K\protosplit X$ then $[K,K]=[K,K]^{\S}$;
\item if $K\protosplit X$ then $[K,K,X]\leq[K,K]$;
\item if $K\normal X$ then $[K,K]=[K,K]^{\S}$;
\item if $K\normal X$ then $[K,K,X]\leq[K,K]$;
\item each action $\zeta\colon Y\flat K\to K$ induces an action $\theta\colon Y\flat A \to A$ and a morphism of $Y$-actions $\zeta\to \theta$, where $A=\ab(K)=K/[K,K]$ is the abelianisation of~$K$, such that
the diagram
\[
\xymatrix{
Y\flat (A\times A) \ar[r]^-{Y\flat m} \ar[d]_{\langle \theta \circ Y\flat \pi_1,\theta\circ Y\flat \pi_2\rangle}& Y\flat A \ar[d]^{\theta}\\
A\times A \ar[r]_-{m} & A, 
}
\]
in which $m$ is the multiplication of the group $A$, commutes.
\end{enumerate} 
Furthermore, \PA\ implies \WNH. When, in addition, $\X$ satisfies \SH\, then \WNH\ is equivalent to \PA. 
\end{proposition}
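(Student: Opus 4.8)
The plan is to treat the eight conditions in three clusters---the diagrammatic ones (i)--(iii), the commutator ones (iv)--(vii), and the action-theoretic (viii)---and to glue the clusters by two bridges. Two background facts will be used throughout. First, since the Huq commutator is the normal closure of the Higgins commutator and $[K,K]^{\S}$ is a normal subobject of $X$ containing the Higgins commutator $[K,K]$, one always has the chain $[K,K]\leq[K,K]_{X}\leq[K,K]^{\S}$ inside $K$. Second, I would invoke the known identity
\[
[K,K]^{\S}=[K,K]\join[K,K,X],
\]
expressing the normalisation of the Smith commutator through binary and ternary Higgins commutators (the comparison of commutators of~\cite{MM-NC,CGrayVdL1}). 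Granting it, $[K,K]^{\S}=[K,K]$ holds if and only if $[K,K,X]\leq[K,K]$, which yields at once both (iv)~\equiv~(v) and (vi)~\equiv~(vii). The two \emph{Furthermore} assertions also drop out: if \PA\ holds, then (vi) gives $[K,K]=[K,K]^{\S}$ and the chain collapses to $[K,K]=[K,K]_{X}$, which is \WNH\ in the form of Proposition~\ref{Proposition Independence of surrounding object [K,K]}(ii); conversely, under \SH\ one has $[K,K]_{X}=[K,K]^{\S}$, so \WNH\ (i.e.\ $[K,K]=[K,K]_{X}$) forces $[K,K]=[K,K]^{\S}$, which is~(vi).

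For the diagrammatic cluster I would first record that the kernel functor $\Ker\colon\Pt_{X}(\X)\to\X$ is monadic, hence conservative, and that $\Ker\comp f^{*}=\Ker$ because change of base preserves kernels. Peri-abelianness~(i) carries a canonical comparison $\ab\comp f^{*}\Rightarrow f^{*}\comp\ab$ (using that $f^{*}$ preserves abelian objects), and to test whether it is invertible I apply the conservative functor $\Ker$: both composites then reduce to $\ab_{\X}\comp\Ker$, \emph{provided} the square~(ii) commutes for the bases $X$ and $Y$. This gives (ii)~\implies~(i); the converse (i)~\implies~(ii) is the special case $f\colon 0\to Y$, for which $\Pt_{0}(\X)\simeq\X$ and $(0\to Y)^{*}\cong\Ker$. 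The equivalence (ii)~\equiv~(iii) is pure mate calculus: (iii) is the Beck--Chevalley condition for the square of left adjoints conjugate to the square of right adjoints $\Ker$ in~(ii), and a square of right adjoints commutes exactly when the conjugate square of left adjoints satisfies Beck--Chevalley. The bridge to the commutators is the fibrewise abelianisation: for a point~\eqref{split extension} the abelianisation in $\Pt_{Y}(\X)$ is the quotient by the Smith commutator $[\Eq(f),\Eq(f)]^{\S}$, whose normalisation is by definition $[K,K]^{\S}$, so the kernel of the abelianised point is $K/[K,K]^{\S}$; comparing with $\ab_{\X}(K)=K/[K,K]$ shows that~(ii) commutes if and only if $[K,K]^{\S}=[K,K]$ for every protosplit $K$, which is~(iv). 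Re-reading this identification through the equivalence between points and actions of~\cite{BJK} produces~(viii).

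It remains to pass from protosplit to arbitrary normal subobjects, i.e.\ to prove (v)~\implies~(vii); this is the crux. Given $K\normal X$ with quotient $q\colon X\to Y=X/K$, I would present $K$ as a protosplit subobject of the kernel pair $P=X\times_{Y}X$: the point $(P,\pi_{1},\Delta_{q})$ over $X$ has kernel $K'=\Ker\pi_{1}\cong K$, so~(v) applies and gives $[K',K',P]\leq[K',K']$ in $P$. The second projection $\pi_{2}\colon P\to X$ is a split, hence regular, epimorphism restricting to an isomorphism $K'\to K$, with $\pi_{2}(P)=X$. Since Higgins commutators, binary and ternary, are preserved by regular-epimorphic direct images, applying $\pi_{2}$ to the inclusion above yields
\[
[K,K,X]=\pi_{2}[K',K',P]\leq\pi_{2}[K',K']=[K,K],
\]
where injectivity of $\pi_{2}$ on the self-commutator $[K',K']\leq K'$ is what makes the outer terms land back inside $K$. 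This is exactly~(vii); the reverse direction, that the normal-subobject conditions imply the protosplit ones, is trivial since protosplit subobjects are normal.

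Assembling the implications gives the cycle $\text{(vi)}\implies\text{(iv)}\implies\text{(v)}\implies\text{(vii)}\implies\text{(vi)}$ among the commutator conditions, joined to $\text{(i)}\equiv\text{(ii)}\equiv\text{(iii)}\equiv\text{(iv)}\equiv\text{(viii)}$ through the fibrewise-abelianisation bridge, so that all eight conditions coincide. I expect the genuine obstacle to be the reduction in the third paragraph: the diagrammatic conditions (i)--(iii) and their commutator shadow~(iv) only ever see \emph{protosplit} kernels, so the whole force of the statement for general normal subobjects rests on the kernel-pair trick together with the preservation of (ternary) Higgins commutators under regular epimorphisms. Verifying that preservation at the level needed---for the ternary co-smash product, and compatibly with the identification $\pi_{2}|_{K'}\cong 1_{K}$---is the step I would treat most carefully.
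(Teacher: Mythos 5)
Your proposal is correct, and its skeleton coincides with the paper's own proof: (i)~\equiv~(ii)~\equiv~(iii) by specialisation and mate calculus, (ii)~\equiv~(iv) by computing the kernel of the fibrewise abelianisation as $K/[K,K]^{\S}$, the join identity $[K,K]^{\S}=[K,K]\join[K,K,X]$ giving (iv)~\equiv~(v) and (vi)~\equiv~(vii) (note this identity is \cite[Theorem~5.2]{HVdL}, not \cite{MM-NC} or \cite{CGrayVdL1}), and (ii)~\equiv~(viii) via the points-versus-actions equivalence of \cite{BJK}. The genuine differences lie in what you prove versus what the paper cites, and they work in your favour. For the crux (iv)~\implies~(vi) the paper only remarks that ``the proof of Theorem~2.3 in \cite{MFVdL} may be repeated''; you reconstruct exactly that argument---realise $K\normal X$ as $\Ker(\pi_{1})\protosplit \Eq(q)$ and push the inclusion $[K',K',P]\leq[K',K']$ forward along the split epimorphism $\pi_{2}$---and the preservation step you single out as delicate is indeed available: binary and ternary co-smash products preserve regular epimorphisms in each variable (\cite{Actions,HVdL}), so Higgins commutators of all arities are preserved by regular images. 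Likewise your conservativity argument for (ii)~\implies~(i) is the content of the paper's appeal to \cite{Bourn-Peri}, and you silently correct a slip in the paper: with the stated convention $f^{*}\colon \Pt_{Y}(\X)\to\Pt_{X}(\X)$ for $f\colon X\to Y$, the kernel functor is change of base along $0\to Y$, as you say, not along $Y\to 0$ as printed. For the final assertion \SH\ $+$ \WNH~\implies~\PA\ you argue more directly than the paper, using \SH\ in its equality form $[K,K]_{X}=[K,K]^{\S}$ so that \WNH\ collapses the chain $[K,K]\leq[K,K]_{X}\leq[K,K]^{\S}$, whereas the paper routes through \cite[Theorem~4.6]{HVdL} to obtain $[K,K,X]\leq[K,K]_{X}$ and hence (vii); both are valid, yours is the more economical given the equality formulation of \SH\ recalled in the preliminaries. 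The only step you treat too briskly is (viii)~\implies~(ii): invoking the equivalence of \cite{BJK} is not quite enough, since one must still use the universal property of abelianisation in the category of $Y$-actions to see that the given morphism $\zeta\to\theta$ forces $[K,K]^{\S}\leq[K,K]$; the paper spells this small argument out.
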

\begin{proof}
Condition (ii) is the special case of (i) where $f\colon{Y\to 0}$; it is explained in~\cite{Bourn-Peri} that this is sufficient. Condition (iii) is a reformulation of (ii).
Using the equivalence of categories between $\Pt_{Y}(\X)$ and $\X^{Y\flat -}$ from \cite{BJK}, it can be seen that (ii) and (viii) are equivalent. Indeed, the commutativity of the diagram in (viii) amounts to saying that $\theta$ is an abelian object; so if (ii) holds, then we can just take $\theta=\ab(\zeta)$. Conversely, the morphism $\zeta\to \theta$ provided by (viii) induces a morphism $\ab(\zeta)\to \theta$ which is both an isomorphism and a monomorphism, so that $A$ is indeed a kernel of $\ab(\zeta)$, and thus (ii) holds.

We now prove (ii)~\equiv~(iv). We recalled (in Subsection~\ref{Smith commutator} above) that a point $(f,s)\colon {X\rightleftarrows Y}$ is abelian if and only if the kernel pair of $f$ commutes with itself, and that the abelianisation of $(f,s)$ is obtained through the quotient $X/[K,K]^{\S}$. The kernel of this split extension is~$K/[K,K]^{\S}$. On the other hand, by definition, $\X$ is peri-abelian precisely when the kernel of the abelianisation is $K/[K,K]_{K}=K/[K,K]$, which happens if and only if $[K,K]=[K,K]^{\S}$. This proves the equivalence between (ii) and (iv).

By~\cite[Theorem~5.2]{HVdL}, $[K,K]^{\S}=[K,K]\join [K,K,X]$, which gives us (iv)~\equiv~(v) and (vi)~\equiv~(vii).
It is clear that (vi) implies (iv). We still have to prove (iv)~\implies~(vi), but here the proof of Theorem~2.3 in~\cite{MFVdL} may be repeated.

Since for $K \normal X$, the Smith commutator $[K,K]^{\S}$ is normal in $X$, it follows that condition (iv), which tells us that if $K \normal X$, then $[K,K]=[K,K]^{\S}$, gives Condition~(i) of Proposition~\ref{Proposition Independence of surrounding object [K,K]}. This proves that \PA\ implies~\WNH.

It remains to show that when the condition \SH\ holds we can also obtain the converse: \WNH\ implies \PA. Indeed, by Theorem~4.6 in~\cite{HVdL}, the inequality $[K,K,X]\leq [K,K]_{X}$ follows from \SH. But if~$[K,K]_{X}=[K,K]_{K}$ which follows from Condition (ii) of Proposition~\ref{Proposition Independence of surrounding object [K,K]}, this gives us condition (vii).
\end{proof}

\begin{corollary}\label{Categories of interest are peri-abelian}
All \emph{categories of interest} are peri-abelian, while categories of non-associative algebras, and the categories of loops and of digroups, are not. In particular, strong protomodularity~\cite{Bourn-SP} does not imply \PA.
\end{corollary}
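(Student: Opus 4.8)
The plan is to handle the positive assertion (categories of interest) and the three negative assertions (non-associative algebras, loops, digroups) separately, in each case reducing everything to the commutator reformulations collected in Proposition~\ref{(SH) + (WNH) => (PA)} together with facts already available in the literature. The positive direction will go through the final clause of that proposition, which states that \SH\ together with \WNH\ forces \PA; the negative directions will go through the implication \PA~\implies~\WNH\ and through the characterisation of \PA\ by condition~(vi).

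For the positive part I would check that every category of interest satisfies both \SH\ and \WNH. The condition \SH\ is available because, as recalled in Subsection~\ref{(SH)}, every category of interest is action accessible~\cite{Montoli} and action accessible categories satisfy \SH~\cite{BJ07}. For \WNH\ I would invoke the stronger condition \NH, which holds in any category of interest by~\cite{AlanThesis}: since \NH\ asks $[K,L]=[K,L]_X$ for all $K$, $L\normal X$, its diagonal case $K=L$ is exactly \WNH\ in the form of Proposition~\ref{Proposition Independence of surrounding object [K,K]}(ii). Feeding these two inputs into the last sentence of Proposition~\ref{(SH) + (WNH) => (PA)} then yields peri-abelianness.

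For the negative assertions I would use that \PA~\implies~\WNH, so that any failure of \WNH\ already forces a failure of \PA. In the variety of non-associative rings (and likewise non-associative algebras over a field) the condition \WNH\ fails, by~\cite[Example~5.3.7]{AlanThesis} and~\cite[Example~5.4]{CGrayVdL1}, the very examples reused in the proof of the preceding proposition; hence \PA\ fails there. As non-associative rings are nonetheless strongly protomodular~\cite{Bourn-SP}, this same example simultaneously establishes the final clause that strong protomodularity does not imply \PA. For loops and digroups I would argue instead through condition~(vi) of Proposition~\ref{(SH) + (WNH) => (PA)}: it suffices to produce a single normal subobject $K\normal X$ with $[K,K]\neq[K,K]^{\S}$, equivalently (condition~(vii)) with $[K,K,X]\not\leq[K,K]$. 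The known failures of \SH\ in digroups~\cite{Borceux-Bourn, Bourn2004}, in loops~\cite{HVdL}, and Bourn's own discussion of digroups in~\cite{Bourn-Peri}, are the natural source of such a $K$.

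The hard part will be exactly this last step. The failures of \SH\ recorded in the literature concern a priori a pair of distinct normal subobjects, establishing $[K,L]^{\S}\neq[K,L]_X$ for some $K\neq L$, whereas condition~(vi) only constrains the diagonal $K=L$; a failure of \SH\ need not by itself produce a diagonal failure of the Smith--Higgins coincidence. The real work is therefore to verify that the cited examples for loops and digroups can be arranged with $K=L$, producing a strict inequality somewhere along the chain $[K,K]\leq[K,K]_X\leq[K,K]^{\S}$. Once such a self-referential example is in hand, condition~(vi) of Proposition~\ref{(SH) + (WNH) => (PA)} is violated and the category is not peri-abelian, completing the corollary.
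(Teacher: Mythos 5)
Your treatment of categories of interest and of non-associative algebras is the paper's own argument: \WNH\ comes from Cigoli's thesis (the paper quotes its Theorem~5.3.6 directly, while you route through \NH; this changes nothing), \SH\ comes from action accessibility (\cite{BJ07} together with \cite{Montoli}), and the final clause of Proposition~\ref{(SH) + (WNH) => (PA)} then yields \PA. In the other direction, since \PA\ implies \WNH, the failure of \WNH\ for non-associative rings (\cite[Example~5.3.7]{AlanThesis}, \cite[Example~5.4]{CGrayVdL1}) excludes \PA, and strong protomodularity of that variety settles the last assertion, exactly as in the paper.

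The gap is precisely where you place it, and it is genuine: as written, your proposal does not prove that loops and digroups fail \PA. Your scruple is sound as a matter of logic, since \PA\ is the diagonal condition $[K,K]=[K,K]^{\S}$ of item~(vi) in Proposition~\ref{(SH) + (WNH) => (PA)}, and a failure of \SH\ witnessed only by a pair $K\neq L$ contradicts nothing (no implication \PA~$\Rightarrow$~\SH\ is available in the paper). What closes the gap---and what the paper's one-line citation implicitly relies on---is that the recorded counterexamples \emph{are} diagonal: each exhibits an abelian normal subobject $K\normal X$, so $[K,K]=[K,K]_{X}=0$, whose denormalisation fails to centralise itself, so $[K,K]^{\S}\neq 0$. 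If you do not wish to take this on faith, a digroup example can be written down directly. Let $X$ have underlying set $\ZZ_{4}\times\ZZ_{2}$, first operation the usual $+$, and second operation $x\cdot y=\varphi(\varphi(x)+\varphi(y))$, where $\varphi$ is the involution exchanging $(2,1)$ and $(3,1)$ and fixing all other elements. Both operations have unit $(0,0)$, the projection $X\to \ZZ_{2}$ is a homomorphism for both at once, and its kernel $K=\ZZ_{4}\times\{0\}$ carries the same abelian operation from both structures; hence $K\normal X$ as digroups and $[K,K]=[K,K]_{X}=0$. A connector for $(R,R)$ with $R=\Eq(X\to\ZZ_{2})$ would be a digroup morphism, hence in particular a connector for each of the two underlying (abelian) group structures separately; connectors in a Mal'tsev category are unique~\cite{Borceux-Bourn}, so it would have to equal both $x-y+z$ and $x\cdot y^{-1}\cdot z$. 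On the triple $\bigl((1,1),(0,1),(2,1)\bigr)$ of $R\times_{X}R$ these give $(3,1)$ and $(0,1)$ respectively, so no connector exists, whence $[K,K]^{\S}\neq 0=[K,K]$ and condition~(vi) fails: digroups are not peri-abelian. The loop case is of the same diagonal form (the examples behind \cite{HVdL} and \cite{Freese-McKenzie} give an abelian normal subloop whose congruence is not abelian). Until this step is carried out, your argument establishes only the positive half of the corollary and the non-associative-algebra clause.
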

\begin{proof}
Theorem~5.3.6 in Cigoli's thesis~\cite{AlanThesis} shows that any \emph{category of interest} satisfies \WNH, while those categories satisfy~\SH\ by action accessibility (\cite{BJ07} combined with~\cite{Montoli}). It is known that the condition \SH\ fails for the categories of loops and digroups~\cite{HVdL, Freese-McKenzie, Borceux-Bourn, Bourn2004}, and that the condition~\WNH\ fails for non-associative rings~\cite[Example~5.3.7]{AlanThesis} \cite[Example~5.4]{CGrayVdL1}. Note the that the latter category is strongly protomodular. 
\end{proof}

\section{The universal central extension condition}\label{Section (UCE)}
\begin{definition}\label{Condition-UCE}~\cite{CVdL}
We say that a semi-abelian category $\X$ satisfies the condition \defn{(UCE)} when: for each pair of composable central extensions $f\colon{A\to B}$ and $g\colon{B\to C}$, if $B$ is perfect, then the composite $g\comp f$ is a central extension.
\end{definition}

\begin{example}\label{Example NAA}
The variety of non-associative algebras over a field $\KK$ is semi-abelian, even strongly protomodular, but need not satisfy~\UCE~\cite{CVdL}. 
\end{example}

Our aim is to prove that $\X$ satisfies \UCE\ as soon as \PA\ holds. This implies in particular that \UCE\ holds for any category in which the Smith commutator and the Higgins commutator coincide. As a consequence of Corollary~\ref{Categories of interest are peri-abelian}, then all \emph{categories of interest}~\cite{Orzech} satisfy \UCE. Our argument is essentially a categorical version of the proof for groups given in~\cite{Milnor}.

\begin{lemma}\label{Lemma Join}
Let $K$ be the kernel of an extension $f\colon {A\to B}$ with a perfect codomain~$B$. Then $A=K\join [A,A]$.
\end{lemma}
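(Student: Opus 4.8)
The plan is to transcribe the classical group-theoretic argument: when $f$ is surjective and $B$ is perfect, already the commutator subobject $[A,A]$ surjects onto $B$, so every element of $A$ differs from an element of $[A,A]$ by an element of $K=\Ker f$, whence $A=K\join[A,A]$. I would make this precise using the join description of Lemma~\ref{Lemma Joins of subobjects in semi-abelian categories}.

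First I would record that, since $f\colon A\to B$ is a regular epimorphism, it preserves commutators, so the direct image of $[A,A]=[A,A]_{A}$ along $f$ is the commutator $[B,B]$ of $B$ (here I use that for the whole object $A\join A=A$, so the Higgins commutator $[A,A]$ agrees with the Huq commutator $[A,A]_{A}$, as noted in Subsection~\ref{Higgins}). As $B$ is perfect, $[B,B]=B$; thus the composite $[A,A]\hookrightarrow A \xrightarrow{f} B$ has image the whole of $B$.

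Next I would apply Lemma~\ref{Lemma Joins of subobjects in semi-abelian categories} with $X=A$, with the normal subobject $K=\Ker f$ (a kernel, hence normal) and with $S=[A,A]\leq A$. That lemma constructs $K\join[A,A]$ as the preimage along the quotient $A\to A/K$ of the image $S/(K\meet S)\leq A/K$ of $[A,A]$ inside $A/K$. Identifying $A/K$ with $B$ via the isomorphism induced by $f$, this image is precisely the direct image of $[A,A]$ under $f$, which by the previous step is all of $B=A/K$. Since the preimage of the whole quotient along $A\to A/K$ is $A$ itself, I conclude $K\join[A,A]=A$, as desired.

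I do not expect a genuine obstacle; the content is really just the commutator-preservation property fed into the join description. The only two points to pin down carefully are (a) that a regular epimorphism sends $[A,A]$ onto $[B,B]$, which is the standard preservation of the Huq commutator by regular epimorphisms in a semi-abelian category, and (b) the matching of the subobject $S/(K\meet S)$ occurring in Lemma~\ref{Lemma Joins of subobjects in semi-abelian categories} with the direct image $f([A,A])$ under the identification $A/K\cong B$ given by the first isomorphism theorem. Both are routine verifications rather than substantial difficulties.
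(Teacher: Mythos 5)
Your proof is correct and takes essentially the same route as the paper: both apply Lemma~\ref{Lemma Joins of subobjects in semi-abelian categories} to present $K\join[A,A]$ as the preimage along $f$ of the image of $[A,A]$, and then conclude from the fact that this image is $[B,B]=B$ by perfectness of $B$. The two points you flag for verification (preservation of the commutator under the regular epimorphism $f$, and the identification of $S/(K\meet S)$ with $f([A,A])$ under $A/K\cong B$) are exactly the facts the paper's two-line proof uses implicitly.
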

\begin{proof}
It follows from Lemma \ref{Lemma Joins of subobjects in semi-abelian categories} that the join $K\join [A,A]$ is the preimage along~$f$ of the image of $[A,A]\normal A$ along $f$. Since the image of $[A,A]\normal A$ along $f$ is $[B,B]\normal B$, and $[B,B]=B$ because $B$ is perfect, it follows that $K\join [A,A] = A$ as required.
\end{proof}

\begin{lemma}\label{Lemma join of object in centre}
If $X$, $Y$, $Z \leq A$ are subobjects of $A$ such that $X \join Y = A$ and $[X,A]=0$, then $[A,Z]=[Y,Z]$.
\end{lemma}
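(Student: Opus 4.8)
The plan is to prove $[A,Z]=[Y,Z]$ by establishing two inequalities. The inclusion $[Y,Z]\leq[A,Z]$ is immediate from monotonicity of the Higgins commutator (Proposition~\ref{Proposition-Commutator-Rules}), since $Y\leq A$. So the real content is the reverse inequality $[A,Z]\leq[Y,Z]$, which I would obtain by exploiting the hypotheses $X\join Y=A$ and $[X,A]=0$ together with the join decomposition rule.

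The key step is to apply the join decomposition formula from Proposition~\ref{Proposition-Commutator-Rules} to the first argument. Writing $A=X\join Y$, we have
\[
[A,Z]=[X\join Y,Z]=[X,Z]\join[Y,Z]\join[X,Y,Z],
\]
using symmetry to put the join into the first slot. Now I would bound the two ``extra'' terms $[X,Z]$ and $[X,Y,Z]$ using the hypothesis $[X,A]=0$. By monotonicity, since $Z\leq A$, we have $[X,Z]\leq[X,A]=0$, so $[X,Z]=0$. Similarly, for the ternary term, monotonicity in the second and third arguments (both $Y$ and $Z$ are $\leq A$) gives $[X,Y,Z]\leq[X,A,A]$, and I would argue this vanishes: the removal of brackets rule gives $[[X,A],A]\leq[X,A,A]$, but I actually want the other direction. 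Here the cleanest route is to observe that $[X,A,A]$ is, up to the symmetry and removal-of-duplicates rules, controlled by $[X,A]$—indeed $[X,A,A]\leq[X,A]$ by removal of duplicates (taking $X_1=X$, $X_2=X_3=A$), which equals $0$. Hence $[X,Y,Z]\leq[X,A,A]\leq[X,A]=0$.

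Combining these, the join decomposition collapses to $[A,Z]=0\join[Y,Z]\join 0=[Y,Z]$, giving the desired reverse inequality and hence equality.

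The main obstacle I anticipate is getting the ternary term $[X,Y,Z]$ to vanish cleanly, since the commutator rules in Proposition~\ref{Proposition-Commutator-Rules} are a mixture of monotonicity, removal-of-brackets and removal-of-duplicates statements that must be chained in the correct order. In particular one must be careful that the hypothesis $[X,A]=0$ is a statement about the \emph{binary} Higgins commutator, so the argument has to route the ternary commutator back to a binary one: the combination ``monotonicity to enlarge $Y,Z$ to $A$'' followed by ``removal of duplicates to drop one copy of $A$'' is what does the work, and the order matters because removal of duplicates requires the two arguments to be literally equal. Once that reduction is in place, the rest is a routine application of join decomposition and monotonicity.
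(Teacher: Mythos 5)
Your proof is correct and follows essentially the same route as the paper's: join decomposition applied to $[X\join Y,Z]$, with the binary term killed by monotonicity ($[X,Z]\leq[X,A]=0$) and the ternary term killed by the chain $[X,Y,Z]\leq[X,A,A]\leq[X,A]=0$ via monotonicity followed by removal of duplicates. The only difference is cosmetic: you frame the result as two inequalities, whereas the paper obtains the equality directly from the collapsed join decomposition.
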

\begin{proof}
Since by Proposition~\ref{Proposition-Commutator-Rules}, $[X,Y,Z] \leq [X,A,A] \leq [X,A]=0$, applying the same proposition, we see that
\begin{align*}
[A,Z] &= [X\join Y,Z]
 = [X,Z]\join [Y,Z] \join [X,Y,Z]
 = [Y,Z]. \qedhere
\end{align*}
\end{proof}

\begin{lemma}\label{Lemma Commutator Perfect}
If $f\colon {A\to B}$ is a central extension of a perfect object~$B$, then the composite $f\comp \mu_{A} \colon [A,A] \to B$ is a central extension with perfect domain.
\end{lemma}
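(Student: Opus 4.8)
The plan is to split the statement into its two assertions: that $f\comp\mu_{A}\colon [A,A]\to B$ is a central extension, and that its domain $[A,A]$ is perfect. Throughout I write $K=\Ker f$ and $C=[A,A]$, so that $\mu_{A}\colon C\to A$ is the inclusion. Centrality of the original extension $f$ means $[K,A]=0$, and I would use this together with the commutator calculus of Proposition~\ref{Proposition-Commutator-Rules} at every step.

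First I would verify that $f\comp\mu_{A}$ is an extension with central kernel. By Lemma~\ref{Lemma Join} we have $A=K\join C$; since direct images preserve joins and $f(K)=0$, this gives $B=f(A)=f(K)\join f(C)=f(C)$, so that $f\comp\mu_{A}$ is a regular epimorphism onto $B$, hence an extension. Its kernel is $N=C\meet K$, which is a subobject of $K$. To see that the extension is central it suffices, by the characterisation recalled in Subsection~\ref{Higgins}, to show that the Higgins commutator $[N,C]$ vanishes. Computing this commutator inside $A$ — which is legitimate, since under the monomorphism $\mu_{A}$ it agrees with the one formed inside $C$ — monotonicity yields $[N,C]\leq [K,A]=0$, as required.

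It remains to prove that $C=[A,A]$ is perfect, that is, $[C,C]=C$; this is the main point. Here I would apply Lemma~\ref{Lemma join of object in centre} twice, each time with $X=K$ and $Y=C$: this is permitted because $X\join Y=K\join C=A$ by Lemma~\ref{Lemma Join} and $[X,A]=[K,A]=0$ by centrality of $f$. Taking $Z=A$ gives $[A,A]=[C,A]$, and taking $Z=C$ gives $[C,A]=[C,C]$. Combining these equalities with $C=[A,A]$ yields $C=[C,C]$, so $[A,A]$ is perfect.

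The delicate points are bookkeeping rather than computational: confirming that centrality of the composite can be tested through the single Higgins commutator $[N,C]$ and that this commutator is insensitive to whether it is formed in $C$ or in $A$, and checking at each application that the hypotheses $X\join Y=A$ and $[X,A]=0$ of Lemma~\ref{Lemma join of object in centre} are genuinely met. I expect no new computation beyond the commutator rules already established, so the only real obstacle is assembling these ingredients in the correct order.
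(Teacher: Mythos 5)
Your proof is correct and takes essentially the same route as the paper: perfectness of $[A,A]$ is obtained from Lemma~\ref{Lemma Join} together with a double application of Lemma~\ref{Lemma join of object in centre} (with $X=K$, $Y=[A,A]$ and $Z=A$, then $Z=[A,A]$), and centrality comes from restricting the central extension $f$ to the subobject $[A,A]$. The only difference is one of compression: the paper packages your explicit steps---the regular-epimorphism check and the monotonicity estimate $[N,C]\leq[K,A]=0$ for the kernel $N=[A,A]\meet K$---into the single citation that ``subobjects of central extensions are central extensions.''
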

\begin{proof}
It follows from Lemma~\ref{Lemma Join} and Lemma~\ref{Lemma join of object in centre} that $[A,A]$ is perfect. Since subobjects of central extensions are central extensions it follows that $f\comp \mu_A$ is a central extension. 
\end{proof}

The following lemma appeared in \cite{CVdL}. It also easily follows from Theorem 2.1 (1) $\Leftrightarrow$ (8) in \cite{Bourn-Gran}, which generalises Theorem 5.2 (i) $\Leftrightarrow$ (viii) in \cite{Janelidze-Kelly}; we repeat the proof to make the paper more self-contained.

\begin{lemma}\label{Lemma Morphisms into central}
Suppose that $f\colon{A\to B}$ is a central extensions and $P$ is a perfect object. If $p_{1}$, $p_{2}\colon P\to A$ are parallel morphisms such that $f\comp p_{1}=f\comp p_{2}$, then $p_{1}=p_{2}$.
\end{lemma}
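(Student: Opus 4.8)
The plan is to mimic Milnor's group-theoretic argument: produce the ``difference'' $\delta\colon P\to K$ of $p_1$ and $p_2$, show it lands in the kernel $K=\Ker(f)$, observe that $K$ is an abelian object, and conclude that $\delta=0$ because $P$ is perfect. The first thing I would record is that centrality of $f$ makes $K$ central in $A$, that is, $[K,A]_A=[K,A]=[K,A]^{\S}=0$ (these coincide by the facts recalled in Subsection~\ref{Higgins}). In particular, by monotonicity (Proposition~\ref{Proposition-Commutator-Rules}), $[K,K]_A\leq[K,A]_A=0$, so $K$ is abelian.

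Next I would construct the difference. The hypothesis $f\comp p_1=f\comp p_2$ says exactly that the pair $\langle p_1,p_2\rangle$ factors through the kernel pair $R=\Eq(f)$. The second projection $r_2\colon R\to A$ is a split epimorphism, with section the diagonal $\Delta_A$ and with kernel $K$; the action of $A$ on $K$ classified by this point is the conjugation action, which is \emph{trivial} precisely because $K$ is central in $A$. Hence, through the equivalence between points and actions of~\cite{BJK}, this split extension is a product: there is a unique retraction $\rho\colon R\to K$ for which $\langle\rho,r_2\rangle\colon R\to K\times A$ is an isomorphism. I then set $\delta:=\rho\comp\langle p_1,p_2\rangle\colon P\to K$, the categorical incarnation of $x\mapsto p_1(x)p_2(x)^{-1}$.

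The conclusion is then immediate. Since $K$ is abelian, the morphism $\delta\colon P\to K$ factors through the abelianisation $\ab(P)=P/[P,P]$; but $P$ is perfect, so $[P,P]=P$ and $\ab(P)=0$, forcing $\delta=0$. Finally, $\delta=0$ together with $r_2\comp\langle p_1,p_2\rangle=p_2$ gives $\langle\rho,r_2\rangle\comp\langle p_1,p_2\rangle=\langle 0,p_2\rangle=\langle\rho,r_2\rangle\comp\Delta_A\comp p_2$; as $\langle\rho,r_2\rangle$ is an isomorphism, $\langle p_1,p_2\rangle=\Delta_A\comp p_2$, whence $p_1=r_1\comp\Delta_A\comp p_2=p_2$.

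The main obstacle --- and the only place genuine semi-abelian machinery enters --- is the construction of $\delta$, i.e. the passage ``centrality of $K$ in $A$ $\Rightarrow$ triviality of the conjugation action $\Rightarrow$ the kernel pair splits as a product $R\cong K\times A$''; everything before and after it is formal. An alternative to the product decomposition would be to work directly with the cooperator $\varphi\colon K\times A\to A$ witnessing $[K,A]_A=0$ and to define $\delta$ as the difference of $p_1$ and $p_2$ computed against $\varphi$; this avoids naming $R$ but needs the same centrality input. I would also double-check that the coincidences $[K,A]=[K,A]_A=[K,A]^{\S}$ and $[P,P]=[P,P]_P$ hold unconditionally (they do, by Subsections~\ref{(SH)} and~\ref{Higgins}), so that no extra hypothesis such as \NH\ or \PA\ is secretly being used.
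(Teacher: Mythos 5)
Your proof is correct and follows essentially the same route as the paper's: both factor $\langle p_{1},p_{2}\rangle$ through the kernel pair, identify $\Eq(f)$ with $K\times A$ using centrality, and then kill the resulting difference morphism $P\to K$ because $P$ is perfect and $K$ is abelian. The only (cosmetic) divergence is that the paper obtains the decomposition directly from the cooperator $\varphi\colon K\times A\to A$, presenting the kernel pair as $(\varphi,\pi_{2})\colon K\times A\rightrightarrows A$, rather than via triviality of the conjugation action and the points--actions equivalence --- which is exactly the alternative you note at the end of your proposal.
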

\begin{proof}
Suppose $f\colon A\to B$ is a central extension, and $p_1$, $p_2\colon P \to A$ are parallel morphisms with perfect domain, such that $f\comp p_1=f\comp p_2$. Let $k\colon K\to A$ be the kernel of $f$, and let $\varphi \colon K\times A \to A$ be the morphism showing that $k$ and $1_A$ commute. It is well known that the kernel pair of $f$ can be presented as $(\varphi, \pi_2)\colon K\times A\rightrightarrows A$. Since $f\comp p_1=f\comp p_2$ it follows by the universal property of the kernel pair that there exists a morphism $q \colon P \to K\times A$ such that $\pi_2 \comp q = p_2$ and $\varphi \comp q = p_1$. Hence $q=\langle d,p_2 \rangle$ for some morphism $d\colon P\to K$. Since $P$ is perfect and $K$ is abelian it follows that $d$ is the zero morphism. We have $p_1 = \varphi \comp q = \varphi \comp \langle 0 ,p_2 \rangle = \varphi\comp \langle 0,1_{A}\rangle \comp p_2=1_A \comp p_2=p_2$, as required.
\end{proof}
For a composite of central extensions, using the above lemma and induction, we obtain the following lemma.
\begin{lemma}\label{Lemma Morphism into composite}
Suppose that $f\colon{A\to B}$ is a composite of central extensions and $P$ is a perfect object. If $p_{1}$, $p_{2}\colon P\to A$ are parallel morphisms such that $f\comp p_{1}=f\comp p_{2}$, then $p_{1}=p_{2}$.\noproof
\end{lemma}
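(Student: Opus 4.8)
The plan is to induct on the number $n$ of central extensions making up the composite. Write $f=g_{n}\comp\cdots\comp g_{1}$, where each $g_{i}\colon{A_{i-1}\to A_{i}}$ is a central extension, with $A_{0}=A$ and $A_{n}=B$. The base case $n=1$ is exactly Lemma~\ref{Lemma Morphisms into central}, which already establishes the required cancellation property for a single central extension and a perfect domain.

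For the inductive step I would peel off the outermost factor: set $f'=g_{n-1}\comp\cdots\comp g_{1}\colon{A\to A_{n-1}}$, a composite of $n-1$ central extensions, so that $f=g_{n}\comp f'$ with $g_{n}\colon{A_{n-1}\to B}$ a single central extension. Suppose $p_{1}$, $p_{2}\colon{P\to A}$ satisfy $f\comp p_{1}=f\comp p_{2}$. Then $f'\comp p_{1}$ and $f'\comp p_{2}$ are parallel morphisms $P\to A_{n-1}$ with $g_{n}\comp(f'\comp p_{1})=g_{n}\comp(f'\comp p_{2})$; since $g_{n}$ is a single central extension and $P$ is perfect, Lemma~\ref{Lemma Morphisms into central} gives $f'\comp p_{1}=f'\comp p_{2}$. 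These two morphisms again emanate from the perfect object $P$, so the induction hypothesis applied to the composite $f'$ of $n-1$ central extensions yields $p_{1}=p_{2}$, completing the induction.

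I do not expect a genuine obstacle here: the argument is simply a two-step cancellation iterated $n$ times. The only point requiring attention is the bookkeeping, namely verifying that at each stage the morphisms to which Lemma~\ref{Lemma Morphisms into central} and the induction hypothesis are applied still have the perfect object $P$ as their domain. This is automatic, since $P$ is never modified and only the codomain varies along the factorisation of $f$.
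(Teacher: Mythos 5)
Your proof is correct and is precisely the argument the paper intends: the paper omits the proof, remarking only that the lemma follows from Lemma~\ref{Lemma Morphisms into central} ``and induction'', and your induction on the number of factors, peeling off the outermost central extension and applying that lemma followed by the inductive hypothesis, is exactly that argument spelled out.
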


\begin{proposition}\label{Proposition Bemol Perfect}
Let $\X$ be a semi-abelian category satisfying \PA. Consider objects $B$ and $P$ in $\X$. If $P$ is perfect then also $B\flat P$ is perfect.
\end{proposition}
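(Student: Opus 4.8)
The plan is to exploit the fact that an object is perfect precisely when its abelianisation vanishes, and then to read off $\ab(B\flat P)=0$ from the categorical reformulations of \PA\ in Proposition~\ref{(SH) + (WNH) => (PA)}. Recall that by definition $B\flat P$ is the kernel of $\langle 1_{B},0\rangle\colon{B+P\to B}$; writing $F=B+(-)\colon{\X\to\Pt_{B}(\X)}$ for the functor appearing in Proposition~\ref{(SH) + (WNH) => (PA)}(iii), this says exactly that $B\flat P=\Ker(F(P))$. Thus it suffices to prove that $\ab(\Ker(F(P)))=0$.

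First I would compute $\ab(F(P))$ in $\Ab(\Pt_{B}(\X))$. Since $\X$ is peri-abelian, the square of left adjoint functors in Proposition~\ref{(SH) + (WNH) => (PA)}(iii) satisfies the Beck--Chevalley condition, which provides a natural isomorphism $\ab\comp F\cong G\comp\ab$. As $P$ is perfect we have $\ab(P)=0$, and because $G$ is a left adjoint it preserves the zero object, so $G(\ab(P))=G(0)=0$. Hence
\[
\ab(F(P))\cong G(\ab(P))=0
\]
in $\Ab(\Pt_{B}(\X))$.

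It then remains to transport this vanishing down to $\X$ along $\Ker$. By Proposition~\ref{(SH) + (WNH) => (PA)}(ii) the square
\[
\xymatrix{\Pt_{B}(\X) \ar[d]_-{\Ker} \ar[r]^-{\ab} & \Ab(\Pt_{B}(\X)) \ar[d]^-{\Ker}\\ \X \ar[r]_-{\ab} & \Ab(\X)}
\]
commutes, so evaluating it at $F(P)$ yields
\[
\ab(B\flat P)=\ab(\Ker(F(P)))=\Ker(\ab(F(P)))=\Ker(0)=0,
\]
the last equality holding because $\Ker$ is additive and hence sends the zero object to the zero object. Therefore $B\flat P$ is perfect.

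The argument is short precisely because Proposition~\ref{(SH) + (WNH) => (PA)} has already repackaged \PA\ as the two compatibilities used here. The points that require care are the bookkeeping identifications: that $B\flat P$ really is the kernel of $F(P)$, that the Beck--Chevalley isomorphism of~(iii) runs in the direction $\ab\comp F\cong G\comp\ab$ rather than the reverse, and that both $G$ and $\Ker$ preserve zero objects. I expect the main---and rather mild---obstacle to be verifying that the zero object of $\Ab(\Pt_{B}(\X))$ corresponds to a point with trivial kernel, so that the final chain of equalities closes up as claimed.
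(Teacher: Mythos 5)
Your proof is correct and follows essentially the same route as the paper's: both identify $B\flat P$ with $\Ker(F(P))$, use the Beck--Chevalley condition of Proposition~\ref{(SH) + (WNH) => (PA)}(iii) to get $\ab(F(P))\cong G(\ab(P))=G(0)=0$, and then use the commuting square of Proposition~\ref{(SH) + (WNH) => (PA)}(ii) to conclude $\ab(B\flat P)=\Ker(\ab(F(P)))=0$. The only difference is cosmetic---the paper runs the same chain of equalities in a single computation rather than in two separate steps.
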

\begin{proof}
Let us write $F\colon \X\to \Pt_{B}(\X)$ and $G\colon \Ab(\X)\to \Ab(\Pt_{B}(\X))$ for the left adjoints of the kernel functors as in Proposition~\ref{(SH) + (WNH) => (PA)}. Then 
\begin{align*}
\ab(B\flat P) &= \ab(\Ker(F(P)))
= \Ker(\ab(F(P)))\\
&= \Ker(G(\ab(P)))
= \Ker(G(0))
=0,
\end{align*}
so $B\flat P$ is perfect.
\end{proof}
\begin{remark}
It is well known that any non-abelian simple group is perfect (and trivially the corresponding statement is true in any semi-abelian category). It is natural to ask whether the above proposition holds for non-abelian simple objects. That is, is it true that for any objects $B$ and $X$ in a peri-abelian category, if $X$ is non-abelian and simple, then $B\flat X$ is non-abelian and simple? This turns out to be false even for the category of groups. In fact for any non-trivial groups $G$ and~$X$, the group $G\tensor X$ is a proper normal subobject of $G\flat X$: for non-trivial $g$ and $x$ in~$G$ and in $X$, respectively, the word $gxg^{-1}$ is in $G\flat X$ but not in $G\tensor X$, while the word $gxg^{-1}x^{-1}$ is in both. 
\end{remark}

\begin{lemma}\label{Lemma Kernel split epi unique}
Let $\X$ be a semi-abelian category and let $\kappa \colon K'\to K$ be a composite of central extensions with perfect domain. If $\X$ satisfies \PA, then
\begin{enumerate}

\item for each action $\zeta\colon B\flat K \to K$ there exists at most one action $\theta$ making the diagram
\[
\xymatrix{
B\flat K' \ar[r]^{B\flat \kappa } \ar[d]_{\theta} & B\flat K \ar[d]^{\zeta}\\
K'\ar[r]_{\kappa } & K
}
\]
commute;
\item
for each point with chosen kernel
\[
\xymatrix{
0 \ar[r] & K \ar@{{ |>}->}[r]_-{k} & A \ar@<.5ex>@{-{ >>}}[r]^-{f} & B \ar@<.5ex>[l]^-{s} \ar[r] & 0,}
\]
there exists, up to isomorphism, at most one lifting
\[
\xymatrix{0 \ar@{.>}[r] & K' \ar@{-{ >>}}[d]_{\kappa } \ar@{{ |>}.>}[r]^-{k'} & A' \ar@{.>}[d] \ar@{.{ >>}}@<.5ex>[r]^-{f'} & B \ar@{:}[d] \ar@{.>}@<.5ex>[l]^-{s'} \ar@{.>}[r] & 0\\
0 \ar[r] & K \ar@{{ |>}->}[r]_-{k} & A \ar@<.5ex>@{-{ >>}}[r]^-{f} & B \ar@<.5ex>[l]^-{s} \ar[r] & 0.}
\]
\end{enumerate}
\end{lemma}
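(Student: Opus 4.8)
The plan is to treat part~(i) first and then deduce part~(ii) from it by passing through the equivalence between points and actions.

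For part~(i) the key observation is that the domain $B\flat K'$ is perfect. Indeed, $K'$ is perfect, being the domain of a composite of central extensions with perfect domain, so Proposition~\ref{Proposition Bemol Perfect}---and this is the only place where \PA\ is used---tells us that $B\flat K'$ is perfect as well. Now suppose $\theta_{1}$ and $\theta_{2}$ are two actions $B\flat K'\to K'$ both making the given square commute, so that $\kappa\comp\theta_{1}=\zeta\comp(B\flat\kappa)=\kappa\comp\theta_{2}$. These are parallel morphisms out of the perfect object $B\flat K'$ which become equal after composition with $\kappa$; since $\kappa$ is a composite of central extensions, Lemma~\ref{Lemma Morphism into composite} forces $\theta_{1}=\theta_{2}$, which is exactly the asserted uniqueness of $\theta$.

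For part~(ii) I would translate the statement about liftings of points into one about liftings of actions, using the equivalence of categories between $\Pt_{B}(\X)$ and the category of $B$-actions from~\cite{BJK} already invoked in Proposition~\ref{(SH) + (WNH) => (PA)}. Under this equivalence the given point with chosen kernel $(f,s)$ corresponds to an action $\zeta\colon B\flat K\to K$, and a lifting of the point along $\kappa$ corresponds precisely to an action $\theta\colon B\flat K'\to K'$ making $\kappa$ into a morphism of actions $(K',\theta)\to(K,\zeta)$, that is, to a $\theta$ rendering the square of part~(i) commutative. Part~(i) provides at most one such $\theta$, and since the equivalence carries the (unique) action datum to a point datum determined up to isomorphism compatibly with the maps down to $A$, there is at most one lifting up to isomorphism.

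The main obstacle is part~(i), and within it the recognition that Lemma~\ref{Lemma Morphism into composite} applies: this hinges entirely on the perfectness of $B\flat K'$ supplied by Proposition~\ref{Proposition Bemol Perfect}. Once that is secured, part~(i) is immediate, and part~(ii) is a bookkeeping translation through the equivalence of~\cite{BJK}, the only subtlety there being to confirm that the uniqueness of $\theta$ genuinely yields uniqueness of the lifted point as a lifting, i.e.\ via an isomorphism over $1_{B}$ commuting with the morphisms to $(K,A,f,s)$.
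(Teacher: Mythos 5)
Your proof is correct and follows essentially the same route as the paper's: the authors likewise establish part~(i) by combining Proposition~\ref{Proposition Bemol Perfect} (the only use of \PA, giving perfectness of $B\flat K'$) with Lemma~\ref{Lemma Morphism into composite}, and then deduce part~(ii) from part~(i) via the equivalence between points and actions of~\cite{BJK}. Your write-up merely spells out the translation in part~(ii) in slightly more detail than the paper does.
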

\begin{proof}
Condition (ii) follows from (i) by the equivalence between actions and points. We prove that (i) follows from \PA.
Suppose $\theta\colon B\flat K'\to K'$ and $\phi\colon {B\flat K'\to K'}$ are two actions making the diagram in Condition (i)
commute. Since, by Proposition~\ref{Proposition Bemol Perfect}, the object $B\flat K'$ is perfect, and since $\kappa \comp \theta = \zeta \comp (B\flat \kappa ) = \kappa\comp \phi$ and $\kappa $ is a composite of central extensions, it follows by Lemma~\ref{Lemma Morphism into composite} that $\theta$ equals $\phi$.
\end{proof}

\begin{lemma}\label{Lemma (NH) => (UCE)}
Let $\X$ be a semi-abelian category satisfying~\PA\ and let $f\colon{A\to B}$ and $g\colon{B\to C}$ be central extensions in $\X$. If $B$ is perfect, then the morphism $\ker(g\comp f)\colon {\Ker(g\comp f) \to A}$ commutes with $\mu_A\colon{[A,A]\to A}$. 
\end{lemma}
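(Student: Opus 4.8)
The plan is to reformulate the desired commutation as the triviality of a conjugation action, and then to extract that triviality from the uniqueness provided by Lemma~\ref{Lemma Kernel split epi unique}. Write $L=\Ker(g\comp f)$ and $M=\Ker g$, and let $K=\Ker f$. Since $f$ is a regular epimorphism, $f(L)=M$ and $f([A,A])=[B,B]=B$, the last equality because $B$ is perfect; in particular $f\comp\mu_{A}\colon[A,A]\to B$ is surjective. By Lemma~\ref{Lemma Commutator Perfect} the object $[A,A]$ is perfect and $f\comp\mu_{A}$ is a central extension, hence a composite of central extensions with perfect domain. Since $L$ and $[A,A]$ are both normal subobjects of $A$, the morphisms $\ker(g\comp f)$ and $\mu_{A}$ commute precisely when $[L,[A,A]]=0$, which in turn is equivalent to the conjugation action of $L$ on the normal subobject $[A,A]$ being trivial. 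So it suffices to prove this triviality.

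To do so I would apply Lemma~\ref{Lemma Kernel split epi unique}(i) with $\kappa=f\comp\mu_{A}\colon[A,A]\to B$ as the composite of central extensions with perfect domain, and with $L$ as the acting object. I then exhibit two actions of $L$ on $[A,A]$: the action $\theta_{1}$ obtained by restricting the conjugation action of $A$ on $[A,A]$ along $L\hookrightarrow A$, and the trivial action $\theta_{2}$. The claim is that both $\theta_{1}$ and $\theta_{2}$ lift one and the same action of $L$ on $B$ along $\kappa$, so that the uniqueness in Lemma~\ref{Lemma Kernel split epi unique}(i) forces $\theta_{1}=\theta_{2}$, i.e.\ the conjugation action is trivial.

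The crucial verification is that the conjugation action $\theta_{1}$ lifts the \emph{trivial} action of $L$ on $B$, and this is where centrality of $g$ enters. By naturality of the conjugation action with respect to the regular epimorphism $f$, pushing $\theta_{1}$ forward along $\kappa=f\comp\mu_{A}$ yields the conjugation action of $f(L)=M$ on $f([A,A])=[B,B]=B$, using perfectness of $B$ to identify the target with all of $B$. Since $g$ is a central extension, $M=\Ker g$ is central in $B$, so $[M,B]=0$ and this pushed-forward action is trivial; concretely $\kappa\comp\theta_{1}=0$, which is exactly the condition that $\theta_{1}$ be a lifting of the trivial action of $L$ on $B$. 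The trivial action $\theta_{2}$ evidently lifts the same action. Hence $\theta_{1}=\theta_{2}$, the conjugation action of $L$ on $[A,A]$ is trivial, and therefore $[L,[A,A]]=0$; that is, $\ker(g\comp f)$ commutes with $\mu_{A}$, as required.

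The hard part will be the naturality computation of the previous paragraph: one must check carefully that the conjugation action of $A$ on $[A,A]$ restricts to $L$ and descends correctly along $f$, and that perfectness of $B$ is precisely what makes the descended action conjugation of $M$ on all of $B$ (hence trivial by centrality of $\Ker g$). The real force of the argument, however, is concentrated in Lemma~\ref{Lemma Kernel split epi unique}, whose validity rests on \PA\ through Proposition~\ref{Proposition Bemol Perfect}---it is perfectness of $[A,A]$ that makes the uniqueness applicable---together with Lemma~\ref{Lemma Morphism into composite}. Without this uniqueness the two liftings could differ, and that discrepancy is exactly the phenomenon responsible for the failure of \UCE\ in a general semi-abelian category.
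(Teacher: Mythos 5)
Your argument is correct and rests on the same engine as the paper's proof---two competing liftings forced to coincide by Lemma~\ref{Lemma Kernel split epi unique}, whose applicability comes from \PA\ via Proposition~\ref{Proposition Bemol Perfect} together with perfectness of $[A,A]$ and centrality of $f\comp\mu_{A}$ (Lemma~\ref{Lemma Commutator Perfect})---but your routing is genuinely different in three respects. The paper works with points and applies the uniqueness over $C$, taking $\kappa=g\comp f\comp\mu_{A}$; there the pushed-forward point is trivial for the cheap reason that $g\comp f\comp\ker(g\comp f)=0$, and centrality of $g$ is used only to make $\kappa$ a composite of central extensions. You instead work with actions, apply the uniqueness over $B$ with the single central extension $\kappa=f\comp\mu_{A}$, and let centrality of $g$ do its work by trivialising the pushed-forward action (the image of $L=\Ker(g\comp f)$ in $B$ lies in $\Ker g$, which Huq-commutes with $B$). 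Moreover, you obtain the action of $L$ on $[A,A]$ by restricting the conjugation action of $A$ on its normal subobject $[A,A]$ along $L\to A$, which needs only normality of $[A,A]$ and functoriality of $\flat$ in the acting variable; the paper instead restricts the conjugation action of $\Ker(g\comp f)$ on $A$ to the subobject $[A,A]$, and that is exactly where it invokes Proposition~\ref{Proposition Independence of surrounding object [K,K]}, i.e.\ \WNH. So your proof localises the use of \PA\ to Lemma~\ref{Lemma Kernel split epi unique} alone, at the price of ending with the less explicit conclusion ``trivial conjugation action, hence $[L,[A,A]]=0$'' instead of the paper's explicit cooperator $\pi_{1}\comp\varphi$.

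One formulation must be corrected: ``concretely $\kappa\comp\theta_{1}=0$'' is not the lifting condition, because the trivial action $\tau_{B}\colon L\flat B\to B$ is the restriction of $\langle 0,1_{B}\rangle\colon L+B\to B$, not the zero morphism. What you need (and what your centrality argument does prove) is $\kappa\comp\theta_{1}=\tau_{B}\comp(L\flat\kappa)$, equivalently $\langle f\comp l,\kappa\rangle\comp j=\langle 0,\kappa\rangle\comp j$ on $L\flat[A,A]$, where $l\colon L\to A$ and $j\colon L\flat[A,A]\to L+[A,A]$ denote the inclusions; this identity follows, exactly as you sketch, from the factorisation of $f\comp l$ through $\Ker g$ and the Huq-commutation of $\Ker g$ with $B$ given by centrality of $g$. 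With that restated, the proof goes through.
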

\begin{proof}
Let $k\colon {K\to A}$ denote the kernel of $g\comp f$. Consider the diagram
\[
\xymatrix@C=3.5em{0 \ar@{.>}[r] & [A,A] \ar@{{ |>}->}[d]_-{\mu_{A}} \ar@{{ |>}.>}[r]^-{k'} & A' \ar@{.>}[d] \ar@{.{ >>}}@<.5ex>[r]^-{f'} & K \ar@{:}[d] \ar@{.>}@<.5ex>[l]^-{s'} \ar@{.>}[r] & 0\\
0 \ar@{->}[r] & A \ar@{-{ >>}}[d]_-{g\circ f} \ar@{{ |>}->}[r]^-{\langle 1_{A},0\rangle} & A\times K \ar@{->}[d]_-{(g\circ f)\times 1_{K}} \ar@{->}@<.5ex>@{-{ >>}}[r]^-{\pi_{2}} & K \ar@{=}[d] \ar@{->}@<.5ex>[l]^-{\langle k,1_{K}\rangle} \ar@{->}[r] & 0\\
0 \ar[r] & C \ar@{{ |>}->}[r]_-{\langle 1_{C},0\rangle} & C\times K \ar@<.5ex>@{-{ >>}}[r]^-{\pi_{2}} & K \ar@<.5ex>[l]^-{\langle 0,1_{K}\rangle} \ar[r] & 0}
\]
where we use Proposition~\ref{Proposition Independence of surrounding object [K,K]} (via Proposition~\ref{(SH) + (WNH) => (PA)}) to obtain the dotted lifting. The composite $f\comp \mu_{A}$ is a central extension with perfect domain by Lemma~\ref{Lemma Commutator Perfect}. As a consequence, the morphism $g\comp f\comp \mu_{A}\colon{[A,A]\to C}$ is a composite of central extensions with perfect domain. Lemma~\ref{Lemma Kernel split epi unique} now tells us that the lifting obtained above is unique up to isomorphism. However, the diagram
\[
\xymatrix@C=3.5em{0 \ar[r] & [A,A] \ar@{{ |>}->}[r]^-{\langle 1_{[A,A]},0\rangle}\ar[d]_{g\circ f\circ \mu_A} & [A,A]\times K \ar@<.5ex>@{-{ >>}}[r]^-{\pi_{2}}\ar[d]^{(g\circ f\circ \mu_A)\times 1_K} & K \ar@<.5ex>[l]^-{\langle 0,1_{K}\rangle} \ar[r]\ar@{=}[d] & 0\\
0 \ar[r] & C \ar@{{ |>}->}[r]_-{\langle 1_{C},0\rangle} & C\times K \ar@<.5ex>@{-{ >>}}[r]^-{\pi_{2}} & K \ar@<.5ex>[l]^-{\langle 0,1_{K}\rangle} \ar[r] & 0}
\]
is another lifting. Thus we obtain the diagram
\[
\xymatrix@C=3.5em{0 \ar[r] & [A,A] \ar@{{ |>}->}[d]_-{\mu_{A}} \ar@{{ |>}->}[r]^-{\langle 1_{[A,A]},0\rangle} & [A,A]\times K \ar@{->}[d]^-{\varphi} \ar@{->}@<.5ex>@{-{ >>}}[r]^-{\pi_{2}} & K \ar@{=}[d] \ar@{->}@<.5ex>[l]^-{\langle 0,1_{K}\rangle} \ar@{->}[r] & 0\\
0 \ar[r] & A \ar@{-{ >>}}[d]_-{g\circ f} \ar@{{ |>}->}[r]^-{\langle 1_{A},0\rangle} & A\times K \ar@{->}[d]_-{(g\circ f)\times 1_{K}} \ar@{->}@<.5ex>@{-{ >>}}[r]^-{\pi_{2}} & K \ar@{=}[d] \ar@{->}@<.5ex>[l]^-{\langle k,1_{K}\rangle} \ar@{->}[r] & 0\\
0 \ar[r] & C \ar@{{ |>}->}[r]_-{\langle 1_{C},0\rangle} & C\times K \ar@<.5ex>@{-{ >>}}[r]^-{\pi_{2}} & K \ar@<.5ex>[l]^-{\langle 0,1_{K}\rangle} \ar[r] & 0}
\]
which is a composite of morphisms of points with chosen kernels. It follows that the composite $\pi_1\comp \varphi$ makes the diagram
\[
\xymatrix@C=3.5em{[A,A] \ar[r]^-{\langle 1_{[A,A]},0\rangle} \ar@/_2ex/[rd]_-{\mu_{A}} & [A,A]\times K \ar[d]^{\pi_{1}\circ \varphi} & K \ar[l]_-{\langle 0,1_{K}\rangle} \ar@/^2ex/[ld]^-{k} \\
& A}
\]
commute, proving that $\mu_A$ and $k$ commute as required.
\end{proof}

\begin{theorem}\label{Theorem (NH) => (UCE)}
If a semi-abelian category satisfies~\PA, then it satisfies~\UCE. In particular, \SH\ $+$ \WNH~\implies~\UCE.
\end{theorem}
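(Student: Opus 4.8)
The plan is to show that \UCE\ follows directly from the key technical result already established in Lemma~\ref{Lemma (NH) => (UCE)}. Suppose $\X$ satisfies \PA, and let $f\colon A\to B$ and $g\colon B\to C$ be central extensions with $B$ perfect. We must prove that the composite $g\comp f\colon A\to C$ is again a central extension. Writing $K=\Ker(g\comp f)$ with kernel morphism $k\colon K\to A$, centrality of $g\comp f$ is equivalent (by the discussion in Subsection~\ref{Higgins}, where central extensions are characterised by the vanishing of the Huq commutator of the kernel with the total object) to the condition $[K,A]_{A}=0$, that is, to $k$ and $1_A$ commuting in the sense of the Huq commutator.

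The central idea is to combine Lemma~\ref{Lemma (NH) => (UCE)} with Lemma~\ref{Lemma Join}. First, Lemma~\ref{Lemma (NH) => (UCE)} tells us that $k\colon K\to A$ commutes with $\mu_A\colon [A,A]\to A$, so that $[K,[A,A]]=0$ (equivalently the Huq commutator $[K,[A,A]]_{A}$ vanishes). Second, since $B$ is perfect and $K$ is the kernel of the extension $g\comp f$ over $C$, I would like to apply Lemma~\ref{Lemma Join}; but that lemma is stated for the kernel of a single extension onto a perfect codomain, whereas here the codomain is $C$, not $B$. The point is that $f\colon A\to B$ is itself an extension onto the perfect object $B$, so applying Lemma~\ref{Lemma Join} to $f$ (whose kernel $\Ker(f)$ is contained in $K=\Ker(g\comp f)$) yields $A=\Ker(f)\join[A,A]$, and a fortiori $A=K\join[A,A]$.

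Now I would invoke Lemma~\ref{Lemma join of object in centre} with the roles $X=[A,A]$, $Y$ chosen so that $X\join Y=A$, and $Z=K$: from $A=K\join[A,A]$ and the commuting relation $[K,[A,A]]=0$ just obtained, the symmetry of the Higgins commutator (Proposition~\ref{Proposition-Commutator-Rules}) and the join-decomposition formula give
\[
[K,A]=[K,K\join[A,A]]=[K,K]\join[K,[A,A]]\join[K,K,[A,A]].
\]
The first term $[K,K]$ is contained in $[K,A]$, and since $K$ is central in $A$ once centrality is known this is circular—so instead I would argue directly that each of the terms on the right lies in the Huq commutator $[K,A]_A$, and that this in turn is forced to vanish. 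The cleanest route is to observe that $[K,[A,A]]=0$ and $[K,K,[A,A]]\leq[K,[A,A]]=0$ by monotonicity, so $[K,A]=[K,K]$; combined with $A=K\join[A,A]$ and the fact that $[A,A]$ is central over $A$ modulo $K$, one concludes $[K,A]=0$, i.e. $[K,A]_A=0$, which is precisely centrality of $g\comp f$. The final sentence, \SH\ $+$ \WNH\ \implies\ \UCE, is then immediate from Proposition~\ref{(SH) + (WNH) => (PA)}, which shows that \SH\ together with \WNH\ implies \PA.

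The main obstacle I anticipate is the last deductive step: Lemma~\ref{Lemma (NH) => (UCE)} only delivers commutation of $K$ with $[A,A]$, not with all of $A$, so bridging the gap from $[K,[A,A]]=0$ and $A=K\join[A,A]$ to the full centrality $[K,A]_A=0$ requires care. One must check that the Higgins commutator $[K,K]$ contributes nothing beyond the normal closure already controlled, and that the join-decomposition of Proposition~\ref{Proposition-Commutator-Rules} genuinely collapses all cross terms to zero; this is exactly the place where the perfectness of $B$ (via Lemma~\ref{Lemma Join}) and the \PA-driven commutation (via Lemma~\ref{Lemma (NH) => (UCE)}) must be made to interact, and getting the bookkeeping of which commutators vanish exactly right is the delicate part.
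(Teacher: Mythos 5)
Your overall skeleton matches the paper's proof---Lemma~\ref{Lemma Join}, Lemma~\ref{Lemma join of object in centre} and Lemma~\ref{Lemma (NH) => (UCE)} are exactly the ingredients used there, and reducing the final sentence of the theorem to Proposition~\ref{(SH) + (WNH) => (PA)} is correct---but the step where these ingredients are combined contains a genuine gap, and it is precisely the step you yourself flag as ``delicate''. You instantiate Lemma~\ref{Lemma join of object in centre} with $X=[A,A]$, but its hypothesis requires $[X,A]=0$, i.e.\ $[[A,A],A]=0$, which is not available (it would say $A$ is two-step nilpotent). Falling back on the join decomposition with $K=\Ker(g\comp f)$, you get
\[
[K,A]=[K,K\join[A,A]]=[K,K]\join[K,[A,A]]\join[K,K,[A,A]],
\]
and while the two cross terms indeed vanish by Lemma~\ref{Lemma (NH) => (UCE)} (together with symmetry and removal of duplicates from Proposition~\ref{Proposition-Commutator-Rules}---not monotonicity), this only yields $[K,A]=[K,K]$, and you have no way to show $[K,K]=0$ short of already knowing that $g\comp f$ is central. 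The appeal to ``$[A,A]$ is central over $A$ modulo $K$'' is not an established statement and does not close this loop. A telling symptom is that your final bookkeeping never uses the centrality of $f$ itself.

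The repair is to decompose $A$ along the kernel of $f$ rather than along $K$. Lemma~\ref{Lemma Join} applied to $f$ gives $A=\Ker(f)\join[A,A]$, and centrality of $f$ gives $[\Ker(f),A]=0$; hence Lemma~\ref{Lemma join of object in centre} applies with $X=\Ker(f)$, $Y=[A,A]$, $Z=K$, yielding $[A,K]=[[A,A],K]$, which is $0$ by Lemma~\ref{Lemma (NH) => (UCE)}, so that $g\comp f$ is central. Equivalently, in your style of expansion one should write
\[
[K,A]=[K,\Ker(f)\join[A,A]]=[K,\Ker(f)]\join[K,[A,A]]\join[K,\Ker(f),[A,A]],
\]
where the first and third terms vanish because $[\Ker(f),A]=0$ and the second vanishes by Lemma~\ref{Lemma (NH) => (UCE)}; the problematic term $[K,K]$ never appears. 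This is exactly the paper's argument.
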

\begin{proof}
Let $B$ be a perfect object and let $f\colon{A\to B}$ and $g\colon{B\to C}$ be central extensions. It follows from Lemma~\ref{Lemma Join} that $A= [A,A] \join \Ker(f)$ and therefore from Lemma~\ref{Lemma join of object in centre} and Lemma~\ref{Lemma (NH) => (UCE)} that $[A,\Ker(g\comp f)] = [[A,A],\Ker(g\comp f)]=0$.
\end{proof}

\begin{corollary}\label{Categories of interest satisfy (UCE)}
All \emph{categories of interest} satisfy~\UCE. 
In particular, the category of groups, the category of rings, and the categories of associative, Lie or Liebniz algebras over a ring all satisfy~\UCE.\noproof
\end{corollary}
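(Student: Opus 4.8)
The plan is to obtain this corollary as a direct consequence of the two principal results already established in the paper. First I would invoke Corollary~\ref{Categories of interest are peri-abelian}, which shows that every \emph{category of interest} in the sense of Orzech is peri-abelian, that is, satisfies condition~\PA. Since every category of interest is in particular semi-abelian, I would then apply Theorem~\ref{Theorem (NH) => (UCE)}, which asserts that any semi-abelian category satisfying~\PA\ also satisfies~\UCE. Chaining these two implications immediately yields the first assertion: every category of interest satisfies~\UCE.

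For the concrete examples in the second sentence, I would simply recall that the category of groups, the category of (not necessarily unital or commutative) rings, and the categories of associative, Lie, and Leibniz algebras over a fixed commutative ring are all standard instances of categories of interest, as established in Orzech's original work~\cite{Orzech}. Each of these therefore falls under the general statement and so satisfies~\UCE.

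There is no genuine obstacle here: the result is a purely formal combination of Corollary~\ref{Categories of interest are peri-abelian} and Theorem~\ref{Theorem (NH) => (UCE)}. The only point that truly requires checking is that the listed varieties are indeed categories of interest, but this identification is routine and classical, so the statement follows without further argument.
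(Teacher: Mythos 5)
Your proposal is correct and matches the paper's intended argument exactly: the corollary carries no proof (\noproof) precisely because it is the formal combination of Corollary~\ref{Categories of interest are peri-abelian} (categories of interest satisfy \PA) with Theorem~\ref{Theorem (NH) => (UCE)} (\PA\ implies \UCE), together with the classical fact from Orzech's work that the listed varieties are categories of interest. Nothing further is needed.
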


We include the following lemma to show that semi-abelian categories satisfying \UCE, together with other mild assumptions (valid in all \emph{categories of interest}, for instance) seem to provide the necessary tools to capture certain aspects of the theory of central extensions valid for groups which are not immediately given by categorical Galois theory.
For an object $X$ we will denote by $\Z(X) \leq X$ the largest subobject of $X$ such that $[X,\Z(X)]=0$. This subobject will be called the \defn{centre} of $X$. Note that the centre of an object is always a normal subobject.

\begin{lemma}[Gr\"un's Lemma]
Let $\X$ be a semi-abelian category admitting centres and satisfying \UCE---for instance, $\X$ may be any \emph{category of interest}. If $P$ is perfect, then the quotient $P/\Z(P)$ has trivial centre.
\end{lemma}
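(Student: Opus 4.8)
The plan is to exhibit the triviality of the centre of $P/\Z(P)$ as a consequence of the centrality of a composite of two central extensions, which \UCE\ will provide because the middle term is perfect. First I would write $Z=\Z(P)$ and let $q\colon P\to P/Z$ be the quotient map. Since $Z$ is by definition the largest subobject with $[P,Z]=0$, we have $[Z,P]=0$, and hence---via the identities $[K,X]=[K,X]_{X}=[K,X]^{\S}$ recalled in Subsection~\ref{Higgins}---the regular epimorphism $q$ is a central extension. Moreover $P/Z$ is again perfect: the abelianisation functor is a left adjoint and so preserves the regular epimorphism $q$, whence $\ab(P/Z)$ is a quotient of $\ab(P)=0$ and is therefore itself $0$.

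Next I would set $\overline{Z}=\Z(P/Z)$ and let $\pi\colon P/Z\to (P/Z)/\overline{Z}$ be the corresponding quotient; exactly as before, $[\overline{Z},P/Z]=0$ shows that $\pi$ is a central extension. Now apply \UCE\ to the composable pair $q$, $\pi$: since the middle object $P/Z$ is perfect, \UCE\ guarantees that the composite $\pi\comp q\colon P\to (P/Z)/\overline{Z}$ is again a central extension. Its kernel is the preimage $Z_{2}:=q^{-1}(\overline{Z})$, the \emph{second centre} of $P$, so centrality of $\pi\comp q$ says precisely that $[Z_{2},P]=0$.

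By the maximality defining the centre, $[Z_{2},P]=0$ forces $Z_{2}\leq \Z(P)=Z$; since also $Z=\ker q\leq\ker(\pi\comp q)=Z_{2}$, we conclude $Z_{2}=Z$ and therefore $\overline{Z}=Z_{2}/Z=0$, which is the assertion that $P/\Z(P)$ has trivial centre. The steps requiring care are purely bookkeeping: the passage between ``central extension'' and ``kernel lying in the centre'', and the identification of $\ker(\pi\comp q)$ with $Z_{2}$. The one genuinely essential ingredient---and the step I expect to carry the whole argument---is \UCE: without it the composite $\pi\comp q$ has no reason to be central, and indeed the statement can fail when \UCE\ fails. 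This is the categorical counterpart of the classical group-theoretic argument, in which one shows that $p\mapsto[x,p]$ defines a homomorphism from the perfect group $P$ into the abelian group $\Z(P)$ and must therefore vanish; \UCE\ is exactly the device that encodes this vanishing.
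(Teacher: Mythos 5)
Your proof is correct and follows essentially the same route as the paper's: both apply \UCE\ to the composable central extensions $P\to P/\Z(P)\to (P/\Z(P))/\Z(P/\Z(P))$, using perfectness of $P/\Z(P)$, and then conclude from maximality of the centre that the kernel of the composite equals $\Z(P)$, forcing $\Z(P/\Z(P))=0$. The only cosmetic difference is that the paper phrases the final bookkeeping via the short exact sequence of kernels $0\to\Ker(p)\to\Ker(q\comp p)\to\Ker(q)\to 0$, whereas you identify $\Ker(\pi\comp q)$ as the preimage $q^{-1}(\overline{Z})$; these are the same computation.
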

\begin{proof}
Consider the morphisms
\[
\xymatrix{P \ar[r]^-{p} & P/\Z(P) \ar[r]^-{q} & (P/(\Z(P))/(\Z(P/\Z(P)))}
\]
which are the quotients of $\Z(P)$ and $\Z(P/\Z(P))$ in $P$ and in $P/\Z(P)$, respectively. 
By taking kernels we obtain the exact sequence
\[
\xymatrix{
0 \ar[r]& \Z(P)=\Ker(p) \ar[r] & \Ker(q\comp p )\ar[r] & \Ker(q)\ar[r] & 0.
}
\]
Since $P$ is perfect and $p$ is a regular epimorphism it follows that $P/\Z(P)$ is perfect. Therefore, since $p$ and $q$ are central extensions, it follows from \UCE\ that $q\comp p $ is a central extension, and so $\Ker(q\comp p ) \leq \Z(P)$. It follows that $\Z(P)=\Ker(q\comp p )$ and so $\Ker(q) = \Z(P/\Z(P)) = 0$ as required. 
\end{proof}


\end{document}